\numberwithin{equation}{section}
  \theoremstyle{definition}
  \newtheorem{defn}{\protect\definitionname}
  \theoremstyle{plain}
  \newtheorem{lem}{\protect\lemmaname}
\theoremstyle{plain}
\newtheorem{thm}{\protect\theoremname}
  \theoremstyle{remark}
  \newtheorem{rem}{\protect\remarkname}
  \theoremstyle{remark}
  \newtheorem*{acknowledgement*}{\protect\acknowledgementname}
  \providecommand{\acknowledgementname}{Acknowledgement}
  \providecommand{\definitionname}{Definition}
  \providecommand{\lemmaname}{Lemma}
  \providecommand{\remarkname}{Remark}
\providecommand{\theoremname}{Theorem}
\begin{document}

\title{The Regularity of some Vector-Valued Variational Inequalities with
Gradient Constraints}

\author{Mohammad Safdari}
\begin{abstract}
We prove the optimal regularity for some class of vector-valued variational
inequalities with gradient constraints. We also give a new proof for
the optimal regularity of some scalar variational inequalities with
gradient constraints. In addition, we prove that some class of variational
inequalities with gradient constraints are equivalent to an obstacle
problem, both in the scalar and vector-valued case.%
\thanks{Email address: safdari@berkeley.edu%
}
\end{abstract}

\maketitle

\section{Introduction}

Let $U\subset\mathbb{R}^{n}$ be an open bounded set. Suppose $K\subset\mathbb{R}^{n}$
is a balanced (symmetric with respect to the origin) compact convex
set whose interior contains $0$. Also suppose that $\boldsymbol{\eta}\in\mathbb{R}^{N}$
is a fixed nonzero vector. Consider the following problem of minimizing
\begin{equation}
I(\mathbf{v}):=\int_{U}|D\mathbf{v}|^{2}-\boldsymbol{\eta}\cdot\mathbf{v}\, dx
\end{equation}
over 
\begin{equation}
K_{1}:=\{\mathbf{v}=(v^{1},\cdots,v^{N})\in H_{0}^{1}(U;\mathbb{R}^{N})\,\mid\,\|D\mathbf{v}\|_{2,K}\leq1\textrm{ a.e.}\},
\end{equation}
Where 
\begin{equation}
\|A\|_{2,K}:=\underset{z\ne0}{\sup}\,\frac{|Az|}{\gamma_{K}(z)}
\end{equation}
for an $N\times n$ matrix $A$, and $\gamma_{K}$ is the norm associated
to $K$ defined by 
\begin{equation}
\gamma_{K}(x):=\inf\{\lambda>0\,\mid\, x\in\lambda K\}.
\end{equation}
As $K_{1}$ is a closed convex set and $I$ is coercive, bounded and
weakly sequentially lower semicontinuous, this problem has a unique
solution $\mathbf{u}$. We will show that under some extra assumptions
on $K$ 
\[
\mathbf{u}\in C_{\textrm{loc}}^{1,1}(U;\mathbb{R}^{N}).
\]

This problem is a generalization to the vector-valued case of the
elastic-plastic torsion problem, which is the problem of minimizing
\[
J_{\eta}(v):=\int_{U}|Dv|^{2}-\eta v\, dx
\]
for some $\eta>0$, over 
\[
\{v\in H_{0}^{1}(U)\,\mid\,|Dv|\le1\textrm{ a.e.}\}.
\]
The regularity of the elastic-plastic torsion problem has been studied
by \citet{MR0239302}, and \citet{MR513957}. There has been several
extensions of their results to more general scalar problems with gradient
constraints. See for example \citet{MR697646}, \citet{MR0385296},
\citet{MR529814}, \citet{MR607553}, \citet{MR693645}. To the best
of author's knowledge, the only work on the regularity of vector-valued
problems with gradient constraints is \citet{MR1205846}.

Our approach is to show that the above vector-valued problem is reducible
to the scalar problem of minimizing $J_{1}$ over 
\[
\{v\in H_{0}^{1}(U)\,\mid\,|\boldsymbol{\eta}|Dv\in K^{\circ}\textrm{ a.e.}\},
\]
where $K^{\circ}$ is the polar of $K$ (See section 2). Then we show
that this scalar problem is equivalent to a double obstacle problem
with only Lipschitz obstacles. At the end, we generalize the proof
of \citet{MR513957}, to obtain the optimal regularity. We should
note that \citet{MR1875900} proves the regularity of a more general
double obstacle problem by different methods.

In the process described above, we also show that our vector-valued
problem with gradient constraint is equivalent to a vector-valued
obstacle problem. This result, which is the first result of its kind
as far as the author knows, is a generalization to the vector-valued
case of the equivalence between the elastic-plastic torsion problem
and an obstacle problem, proved by \citet{MR0346345}. Later \citet{MR1797872}
proved that the equivalence holds for a larger class of scalar variational
inequalities with gradient constraints. We will further generalize
their result. Suppose $f:\mathbb{R}^{n}\rightarrow\mathbb{R}$ and
$g:\mathbb{R}\rightarrow\mathbb{R}$ are convex functions. Consider
the problem of minimizing 
\begin{equation}
J(v):=\int_{U}f(Dv(x))+g(v(x))\, dx
\end{equation}
over
\begin{equation}
W_{K}:=\{v\in u_{0}+W_{0}^{1,p}(U)\,\mid\, Dv(x)\in K\;\textrm{a.e.}\},
\end{equation}
where $u_{0}\in W^{1,p}(U)$. We will show that under appropriate
assumptions, the minimizer of $J$ over $W_{K}$ is the same as its
minimizer over 
\begin{equation}
W_{u^{-},u^{+}}:=\{v\in u_{0}+W_{0}^{1,p}(U)\,\mid\, u^{-}(x)\leq v(x)\leq u^{+}(x)\;\textrm{a.e.}\},
\end{equation}
for some suitable functions $u^{-},u^{+}$. The difference of our
result with that of \citet{MR1797872} is that we allow $f,g$ to
be only convex, and $K$ to have empty interior. Some of our results
has been proved using different means by \citet{MR1881695}.

\section{The Equivalence in the Scalar Case}

Suppose $K\subset\mathbb{R}^{n}$ is a compact convex set whose interior
contains the origin. Let $J$, $W_{K}$, and $W_{u^{-},u^{+}}$ be
as above. We assume that on $W^{1,p}(U)$, $J$ is finite, bounded
below and sequentially weakly lower semicontinuous. These assumptions
are satisfied if, for example, we impose some growth conditions on
$f,g$ and some mild regularity on $\partial U$. Therefore by our
assumption, $J$ attains its minimum on any nonempty closed convex
subset of $W^{1,p}(U)$. 

Furthermore, we assume that $u_{0}$ is Lipschitz, and 
\[
Du_{0}\in K\qquad\textrm{ a.e.}.
\]
Thus in particular, $W_{K}$ is nonempty.
\begin{defn}
The \textbf{gauge} of $K$ is a convex function defined by 
\begin{equation}
\gamma_{K}(x):=\inf\{\lambda>0\,\mid\, x\in\lambda K\},
\end{equation}
and its \textbf{polar} is the convex set 
\begin{equation}
K^{\circ}:=\{x\,\mid\, x\cdot k\leq1\,\textrm{ for all }k\in K\}.
\end{equation}

\end{defn}
We recall that for all $x,y\in\mathbb{R}^{n}$, we have 
\begin{equation}
x\cdot y\leq\gamma_{K}(x)\gamma_{K^{\circ}}(y).
\end{equation}
Its proof can be found in \citet{MR0274683}. Also, when $K$ is balanced,
$K^{\circ}$ is balanced too, and $\gamma_{K},\gamma_{K^{\circ}}$
are both norms on $\mathbb{R}^{n}$.

Now, let us find $u^{\pm}\in W_{K}$ such that for all $u\in W_{K}$
we have $u^{-}\leq u\leq u^{+}$. Let $u^{\pm}$ be respectively the
unique minimizers of $J^{\pm}(v)=\int_{U}\mp v(x)\, dx$ over $W_{K}$.
We show that they have the desired property. We need the following
lemma.
\begin{lem}
\label{Lemma bdd grad}Suppose $u$ is a compactly supported function
in $W^{1,p}(\mathbb{R}^{n})$ with $Du\in K$ a.e.. Then 
\begin{equation}
u(y)-u(x)\leq\gamma_{K^{\circ}}(y-x)
\end{equation}
 for all $x,y$.\end{lem}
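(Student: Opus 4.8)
The plan is to reduce to the case of a smooth function by mollification, where the inequality is an immediate consequence of the inequality $x\cdot y\le\gamma_{K}(x)\gamma_{K^{\circ}}(y)$ recalled above, and then to pass to the limit. Observe first that since $Du\in K$ a.e.\ and $K$ is compact, we have $Du\in L^{\infty}(\mathbb{R}^{n})$; hence $u$ agrees a.e.\ with a Lipschitz continuous function, and from now on I take $u$ to be this representative, so that the pointwise inequality is meaningful.

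Let $\rho_{\epsilon}$ be a standard mollifier and set $u_{\epsilon}:=u*\rho_{\epsilon}$. Then each $u_{\epsilon}$ is smooth and, for small $\epsilon$, supported in a fixed compact set, with $Du_{\epsilon}=(Du)*\rho_{\epsilon}$. For every $x$, the vector
\[
Du_{\epsilon}(x)=\int_{\mathbb{R}^{n}}Du(x-z)\,\rho_{\epsilon}(z)\,dz
\]
is an average of points of $K$ against a probability measure, hence lies in $K$ because $K$ is closed and convex; equivalently, $\gamma_{K}(Du_{\epsilon}(x))\le1$ for all $x$.

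Now for the smooth function $u_{\epsilon}$ and arbitrary $x,y$ I would integrate along the segment from $x$ to $y$ and apply the fundamental inequality:
\[
u_{\epsilon}(y)-u_{\epsilon}(x)=\int_{0}^{1}Du_{\epsilon}\bigl(x+t(y-x)\bigr)\cdot(y-x)\,dt\le\int_{0}^{1}\gamma_{K}\bigl(Du_{\epsilon}(x+t(y-x))\bigr)\,\gamma_{K^{\circ}}(y-x)\,dt,
\]
and the last integral is at most $\gamma_{K^{\circ}}(y-x)$ since $\gamma_{K}(Du_{\epsilon})\le1$. Because $u$ is continuous (and compactly supported), $u_{\epsilon}\to u$ locally uniformly as $\epsilon\to0$, so letting $\epsilon\to0$ in the inequality $u_{\epsilon}(y)-u_{\epsilon}(x)\le\gamma_{K^{\circ}}(y-x)$ yields $u(y)-u(x)\le\gamma_{K^{\circ}}(y-x)$ for all $x,y$.

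The only step that is not entirely mechanical is the pointwise inclusion $Du_{\epsilon}(x)\in K$, which however is just convexity and closedness of $K$; so I do not anticipate a real obstacle, and the lemma is essentially routine once the mollification is in place. One could alternatively argue directly from absolute continuity of $u$ on almost every line, but mollification avoids having to handle non-coordinate directions.
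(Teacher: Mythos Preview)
Your proof is correct and follows essentially the same route as the paper: mollify, show $Du_{\epsilon}\in K$ pointwise by convexity (the paper phrases this via Jensen's inequality for $\gamma_{K}$), integrate along the segment using $x\cdot y\le\gamma_{K}(x)\gamma_{K^{\circ}}(y)$, and pass to the limit. The only cosmetic difference is that you fix the Lipschitz representative at the outset and then use uniform convergence of $u_{\epsilon}$ to get the inequality everywhere, whereas the paper first obtains it a.e.\ and then redefines $u$ on a null set; both handle the ``for all $x,y$'' conclusion equivalently.
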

\begin{proof}
Consider the mollifications 
\[
u_{\epsilon}(x):=(\eta_{\epsilon}\star u)(x):=\int_{B_{\epsilon}(x)}\eta_{\epsilon}(x-y)u(y)\, dy,
\]
where $\eta_{\epsilon}$ is a nonnegative smooth function with support
in $B_{\epsilon}(0)$, and $\int_{B_{\epsilon}(0)}\eta_{\epsilon}\, dx=1$.
Then we know that $u_{\epsilon}$ converges to $u$ a.e., and $Du_{\epsilon}=\eta_{\epsilon}\star Du$.
Hence 
\begin{eqnarray*}
 & \gamma_{K}(Du_{\epsilon}(x)) & \leq\int_{B_{\epsilon}(x)}\gamma_{K}(\eta_{\epsilon}(x-y)Du(y))\, dy\\
 &  & =\int_{B_{\epsilon}(x)}\eta_{\epsilon}(x-y)\gamma_{K}(Du(y))\, dy\;\le1,
\end{eqnarray*}
where we used Jensen's inequality in the first inequality. Thus

\begin{eqnarray*}
 & u_{\epsilon}(y)-u_{\epsilon}(x) & =\int_{0}^{1}Du_{\epsilon}(x+t(y-x))\cdot(y-x)\, dt\\
 &  & \leq\int_{0}^{1}\gamma_{K}(Du_{\epsilon}(x+t(y-x)))\gamma_{K^{\circ}}(y-x)\, dt\;\le\gamma_{K^{\circ}}(y-x).
\end{eqnarray*}
Now we can let $\epsilon\rightarrow0$ to obtain 
\[
u(y)-u(x)\leq\gamma_{K^{\circ}}(y-x)\qquad\qquad\textrm{ for a.e. }x,y.
\]
We can redefine $u$ on the measure zero set where this relation fails,
in a similar way that we extend Lipschitz functions to the closure
of their domains. The extension will satisfy this relation everywhere.\end{proof}
\begin{lem}
Each function in $W_{K}$ is Lipschitz continuous. Also, $W_{K}$
is bounded in $L^{\infty}(U)$ and in $W^{1,p}(U)$. \end{lem}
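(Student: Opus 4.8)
The plan is to deduce everything from Lemma \ref{Lemma bdd grad}, after reducing to the compactly supported setting. First I would take an arbitrary $v\in W_K$ and write $v=u_0+w$ with $w\in W_0^{1,p}(U)$; since $U$ is bounded and $w$ vanishes on $\partial U$ in the trace sense, I can extend $w$ by zero to all of $\mathbb{R}^n$, obtaining a compactly supported $W^{1,p}(\mathbb{R}^n)$ function. The difficulty is that $v$ itself need not vanish on $\partial U$ nor be compactly supported, so I cannot apply the lemma to $v$ directly. To get around this I would instead apply the lemma to $w$, but $Dw=Dv-Du_0$ need not lie in $K$. So the cleaner route is: observe that $v-u_0$ extended by zero has gradient $Dv-Du_0$ inside $K-K$ (a bounded set), hence is globally Lipschitz with constant controlled by $\operatorname{diam}(K)$ and the geometry; combined with the Lipschitz bound on $u_0$ this gives that $v$ is Lipschitz on $U$ with a constant independent of $v$.

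More directly, and staying closer to the lemma as stated: for $x,y\in U$ I want $|v(y)-v(x)|\le C|y-x|$ with $C$ uniform. Pick $v\in W_K$; then $Dv\in K$ a.e., and I want to localize. For any subdomain compactly contained in $U$ this is immediate by mollifying $v$ exactly as in the proof of Lemma \ref{Lemma bdd grad} (the mollification argument there only used $Du\in K$ a.e., not compact support — compact support was only invoked to make sense of the pointwise redefinition on a null set). So $v$ is locally Lipschitz in $U$ with the uniform constant $\sup_{z\ne 0}\gamma_{K^\circ}(z)/|z|$, which is finite because $\gamma_{K^\circ}$ is a norm (here using that $0\in\operatorname{int}K$, so $K^\circ$ is bounded). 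To upgrade local Lipschitz to Lipschitz up to the boundary with a uniform constant, I would use the boundary values: $v=u_0$ on $\partial U$ and $u_0$ is Lipschitz, so a standard gluing argument (or directly: approximate along a chord from an interior point to a boundary point) controls $\|v\|_{L^\infty(U)}\le \|u_0\|_{L^\infty(\partial U)}+\operatorname{diam}(U)\cdot\sup\gamma_{K^\circ}(z)/|z|$, giving the uniform $L^\infty$ bound.

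From the uniform Lipschitz bound the $W^{1,p}$ bound is immediate: $|Dv|\le \sup_{k\in K}|k|=:M$ a.e. for every $v\in W_K$ since $Dv(x)\in K$, and $\|v\|_{L^p(U)}$ is bounded by the $L^\infty$ bound times $|U|^{1/p}$, so $\|v\|_{W^{1,p}(U)}\le |U|^{1/p}\big(\|u_0\|_{L^\infty}+\operatorname{diam}(U)\sup\gamma_{K^\circ}(z)/|z|\big)+M|U|^{1/p}$, a bound depending only on $U$, $K$, and $u_0$.

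The main obstacle is the passage from interior to boundary regularity: interior Lipschitz continuity with the sharp constant $\sup_z \gamma_{K^\circ}(z)/|z|$ falls out of the mollification argument already present, but justifying a uniform modulus of continuity up to $\partial U$ requires either a mild regularity assumption on $\partial U$ (so that boundary points are reachable by chords, or a segment-condition/extension argument applies) or a direct comparison with $u_0$. I expect the paper handles this by the chord argument sketched above, exploiting that $Du_0\in K$ a.e. too, so that $u_0$ and every $v\in W_K$ satisfy the same pointwise inequality $v(y)-v(x)\le\gamma_{K^\circ}(y-x)$ that was proved in Lemma \ref{Lemma bdd grad}, and this single inequality, valid for all $x,y\in\overline U$, simultaneously yields Lipschitz continuity, the $L^\infty$ bound, and hence the $W^{1,p}$ bound.
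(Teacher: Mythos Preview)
Your first paragraph already contains the paper's argument: write $v=u_0+w$ with $w\in W_0^{1,p}(U)$, extend $w$ by zero to $\mathbb{R}^n$, observe that $Dw=Dv-Du_0$ lies in the bounded set $K-K$ (the paper phrases this as $|Dv-Du_0|<2R$), and apply the mollification computation of Lemma~\ref{Lemma bdd grad} with $K$ replaced by a ball of radius $2R$ to conclude $w$ is globally Lipschitz with constant $2R$. Since $w$ vanishes on $\partial U$, this gives $\|w\|_{L^\infty}\le 2R\,\mathrm{diam}(U)$, and adding back the Lipschitz $u_0$ finishes the $L^\infty$ and Lipschitz claims. For the $W^{1,p}$ bound the paper invokes Poincar\'e (using the common boundary value $u_0$) rather than your $L^\infty$ route, but either works.

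Your subsequent paragraphs take an unnecessary detour: mollifying $v$ itself gives only interior Lipschitz continuity and forces you to confront the boundary-regularity issue you flag as the ``main obstacle.'' The paper sidesteps this entirely by passing to $w=v-u_0$, which \emph{is} compactly supported after zero-extension, so Lemma~\ref{Lemma bdd grad} applies globally and no boundary argument is needed. Your speculation in the last paragraph that the paper handles the boundary via a chord argument using $v(y)-v(x)\le\gamma_{K^\circ}(y-x)$ on $\overline U$ is not what happens; the decomposition $v=u_0+w$ is the whole trick.
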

\begin{proof}
To see this, let $u\in W_{K}$. Then $u=u_{0}+v$ where $v\in W_{0}^{1,p}(U)$.
Thus 
\[
|Dv|=|Du-Du_{0}|<2R
\]
for some $R>0$. Now we can extend $v$ by zero to all of $\mathbb{R}^{n}$,
and the extension will satisfy the same gradient bound. Therefore
by arguments similar to the previous lemma, we can see that the extension
of $v$, and hence $v$ itself, is Lipschitz with Lipschitz constant
$2R$. Using the fact that $v$ is zero on the boundary, this also
implies that $\Vert v\Vert_{L^{\infty}}\leq2RD$, where $D$ is the
diameter of $U$. The result for $u$ follows easily, noting that
$u_{0}$ is Lipschitz.

Now as $\Vert Du\Vert_{L^{\infty}}<C$ for some constant $C$ independent
of $u$, we have $\Vert Du\Vert_{L^{p}}<C$ since $U$ is bounded.
Noting that all $u\in W_{K}$ have the same boundary value, we get
by Poincare inequality $\Vert u\Vert_{W^{1,p}}<C$.
\end{proof}
Now we can see that $J^{\pm}$ are bounded on $W_{K}$. As $J^{\pm}$
are linear, they are weakly continuous. Furthermore $W_{K}$ is convex,
closed and bounded in $W^{1,p}(U)$. Hence $W_{K}$ is compact with
respect to sequential weak convergence. These imply that $J^{\pm}$
have minimizers over $W_{K}$. The uniqueness and the fact that $u^{-}\leq u^{+}$
a.e. on $U$, follows from a similar argument to the proof of the
next lemma.
\begin{lem}
We have 
\[
W_{K}\subset W_{u^{-},u^{+}}.
\]
\end{lem}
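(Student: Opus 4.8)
The plan is to extract a \emph{lattice property} of $W_{K}$ --- namely that $W_{K}$ is closed under the pointwise operations $v\mapsto\max(u,v)$ and $v\mapsto\min(u,v)$ --- and then to combine it with the linearity of $J^{\pm}$ to force the two inequalities $u^{-}\leq u\leq u^{+}$.

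First I would establish the lattice property. Let $u,v\in W_{K}$ and set $w=\max(u,v)$, $z=\min(u,v)$. Write $u=u_{0}+\phi$ and $v=u_{0}+\psi$ with $\phi,\psi\in W_{0}^{1,p}(U)$. Then $w-u_{0}=\max(\phi,\psi)=\psi+(\phi-\psi)^{+}$ and $z-u_{0}=\min(\phi,\psi)=\psi-(\phi-\psi)^{-}$, so since $W_{0}^{1,p}(U)$ is closed under taking positive and negative parts, we get $w,z\in u_{0}+W_{0}^{1,p}(U)$. For the gradient constraint I would invoke the standard formulas $Dw=Du\,\chi_{\{u\geq v\}}+Dv\,\chi_{\{u<v\}}$ and $Dz=Du\,\chi_{\{u\leq v\}}+Dv\,\chi_{\{u>v\}}$ a.e., together with the fact that $D(u-v)=0$, hence $Du=Dv$, a.e.\ on the coincidence set $\{u=v\}$. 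Since $Du,Dv\in K$ a.e.\ and $K$ is (in particular) a set, these formulas give $Dw,Dz\in K$ a.e.; thus $w,z\in W_{K}$. (By the earlier lemma every element of $W_{K}$ is Lipschitz, so one may equivalently work with classical a.e.\ derivatives via Rademacher's theorem.)

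With the lattice property available, fix $u\in W_{K}$. Since $u^{+}\in W_{K}$, also $w:=\max(u,u^{+})\in W_{K}$. If $u>u^{+}$ on a set of positive measure, then $w\geq u^{+}$ everywhere with strict inequality on that set, so $\int_{U}w\,dx>\int_{U}u^{+}\,dx$, i.e.\ $J^{+}(w)<J^{+}(u^{+})$, contradicting the minimality of $u^{+}$; hence $u\leq u^{+}$ a.e. Symmetrically $z:=\min(u,u^{-})\in W_{K}$, and if $u<u^{-}$ on a set of positive measure then $\int_{U}z\,dx<\int_{U}u^{-}\,dx$, i.e.\ $J^{-}(z)<J^{-}(u^{-})$, again a contradiction; hence $u^{-}\leq u$ a.e. Since $u\in u_{0}+W_{0}^{1,p}(U)$ as well, this shows $u\in W_{u^{-},u^{+}}$, and as $u\in W_{K}$ was arbitrary, $W_{K}\subset W_{u^{-},u^{+}}$. (The same lattice argument, applied to two putative minimizers, also yields the uniqueness of $u^{\pm}$ and the relation $u^{-}\leq u^{+}$ referred to above.)

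The only genuinely delicate point is the gradient part of the lattice property: one must know that the weak gradient of a pointwise maximum is obtained by gluing $Du$ and $Dv$ along the sets $\{u>v\}$ and $\{u<v\}$, and that the two gradients agree a.e.\ on $\{u=v\}$. Both are classical facts about Sobolev (or Lipschitz) functions, so no real obstacle arises; everything else is routine bookkeeping with the boundary condition and the strict monotonicity of the linear functionals $J^{\pm}$ under a pointwise increase or decrease of the argument on a set of positive measure.
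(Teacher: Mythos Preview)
Your proof is correct and follows essentially the same approach as the paper: both argue by contradiction using the lattice property of $W_{K}$, constructing $w=\max(u,u^{+})$ (respectively $\min(u,u^{-})$) and observing that $w\in W_{K}$ with $J^{+}(w)<J^{+}(u^{+})$ when $\{u>u^{+}\}$ has positive measure. The only differences are organizational: you isolate the lattice property as a preliminary step and verify the boundary condition $w\in u_{0}+W_{0}^{1,p}(U)$ explicitly, whereas the paper does the construction inline for the single relevant case and also records (but does not actually use) the integral inequalities $\int u^{-}\leq\int u\leq\int u^{+}$ at the outset.
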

\begin{proof}
Suppose $u\in W_{K}$, then $J^{\pm}(u^{\pm})\leq J^{\pm}(u)$. Thus
\[
\int_{U}\mp u^{\pm}\, dx\leq\int_{U}\mp u\, dx,
\]
so 
\[
\int_{U}u^{-}\, dx\leq\int_{U}u\, dx\leq\int_{U}u^{+}\, dx.
\]
Suppose to the contrary that, for example, the set $E:=\{x\,\mid\, u(x)>u^{+}(x)\}$
has positive measure. Consider the function 
\[
w(x):=\max(u,u^{+})=\begin{cases}
u^{+}(x) & x\notin E\\
u(x) & x\in E.
\end{cases}
\]
The derivative of $w$ is 
\[
Dw(x)=\begin{cases}
Du^{+}(x) & x\notin E\\
Du(x) & x\in E
\end{cases}\qquad\textrm{ for a.e. }x.
\]
Therefore we have $Dw(x)\in K$ a.e.. Thus 
\[
J^{+}(w)=-\int_{U}w\, dx<-\int_{U}u^{+}\, dx=J^{+}(u^{+}),
\]
which is a contradiction.
\end{proof}
The following characterization of $u^{\pm}$ will be used later. Here
$d_{K^{\circ}}$ is the metric associated to the norm $\gamma_{K^{\circ}}$.
\begin{thm}
Suppose $u_{0}$ equals a constant $c$ everywhere. Then 
\[
u^{\pm}(x)=c\pm d_{K^{\circ}}(x,\partial U).
\]
\end{thm}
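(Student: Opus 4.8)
The plan is to prove the identity for $u^{+}$ --- the case of $u^{-}$ being entirely analogous, applying Lemma~\ref{Lemma bdd grad} to $c-u$ in place of $u-c$ --- by setting $\psi:=c+d_{K^{\circ}}(\cdot,\partial U)$ and establishing the two inequalities $\psi\le u^{+}$ and $u^{+}\le\psi$ separately.

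First I would verify that $\psi\in W_{K}$. Since $\gamma_{K^{\circ}}$ is a norm, equivalent to the Euclidean one, $d_{K^{\circ}}(\cdot,\partial U)$ is Lipschitz on $U$ and vanishes on $\partial U$; approximating it in $W^{1,p}$ by the compactly supported functions $\max\bigl(d_{K^{\circ}}(\cdot,\partial U)-\epsilon,\,0\bigr)$ shows that $\psi-c\in W_{0}^{1,p}(U)$, so $\psi\in u_{0}+W_{0}^{1,p}(U)$. For the gradient constraint, at a point $x$ of differentiability of $d:=d_{K^{\circ}}(\cdot,\partial U)$ the sublinearity of $\gamma_{K^{\circ}}$ gives $d(x+tv)-d(x)\le t\,\gamma_{K^{\circ}}(v)$ for every $v$ and every $t>0$, hence $Dd(x)\cdot v\le\gamma_{K^{\circ}}(v)$ for all $v$; in particular $Dd(x)\cdot v\le1$ whenever $v\in K^{\circ}$, so $Dd(x)\in(K^{\circ})^{\circ}=K$ by the bipolar theorem. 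Thus $D\psi\in K$ a.e.\ and $\psi\in W_{K}$. Since $W_{K}\subset W_{u^{-},u^{+}}$, every element of $W_{K}$ is dominated by $u^{+}$; in particular $\psi\le u^{+}$.

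For the reverse inequality I would take an arbitrary $u\in W_{K}$ and reduce to Lemma~\ref{Lemma bdd grad}. As $u\in c+W_{0}^{1,p}(U)$, extending $u-c$ by zero to all of $\mathbb{R}^{n}$ produces a compactly supported function $\tilde{u}\in W^{1,p}(\mathbb{R}^{n})$ whose gradient is $\chi_{U}\,Du$; this lies in $K$ a.e., since $Du\in K$ a.e.\ on $U$ while $\tilde{u}$ is constant equal to $0$ off $U$, so that its gradient vanishes a.e.\ there and $0\in K$. Lemma~\ref{Lemma bdd grad} then yields $\tilde{u}(x)-\tilde{u}(y)\le\gamma_{K^{\circ}}(x-y)$ for all $x,y$; choosing $y\in\partial U$, where $\tilde{u}(y)=0$, and taking the infimum over such $y$ gives $u(x)-c\le d_{K^{\circ}}(x,\partial U)$, i.e.\ $u\le\psi$. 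Applying this to $u=u^{+}\in W_{K}$ gives $u^{+}\le\psi$, and combining with the previous step, $u^{+}=\psi=c+d_{K^{\circ}}(\cdot,\partial U)$.

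The genuinely delicate point is this last reduction to Lemma~\ref{Lemma bdd grad}: checking that the zero-extension of $u-c$ keeps its gradient in $K$ a.e.\ even when $\partial U$ is irregular, which is why it is cleanest to note that $\tilde{u}$ is locally constant off $U$ rather than to argue directly on $\partial U$. The remaining ingredients --- that a bounded Lipschitz function vanishing on $\partial U$ lies in $W_{0}^{1,p}(U)$, and that $(K^{\circ})^{\circ}=K$ --- are standard, and the only bookkeeping is to pass, exactly as in the proof of Lemma~\ref{Lemma bdd grad}, from the a.e.\ inequalities to statements about the continuous representatives, so that the asserted pointwise identity holds everywhere.
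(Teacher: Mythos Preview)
Your proof is correct and follows essentially the same route as the paper's: first verify that $c+d_{K^{\circ}}(\cdot,\partial U)\in W_{K}$ (you argue via the bipolar identity $(K^{\circ})^{\circ}=K$, where the paper cites an external reference for the equivalent implication), then apply Lemma~\ref{Lemma bdd grad} to the zero extension of $u-c$ to show that every $u\in W_{K}$ satisfies $u\le c+d_{K^{\circ}}(\cdot,\partial U)$. The only cosmetic difference is that you invoke the already-established inclusion $W_{K}\subset W_{u^{-},u^{+}}$ to obtain $\psi\le u^{+}$, whereas the paper phrases the conclusion directly as identifying the minimizers of $J^{\pm}$.
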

\begin{proof}
It is enough to show that $c\pm d_{K^{\circ}}(x,\partial U)$ are
the minimizers of $J^{\pm}$. The fact that $c\pm d_{K^{\circ}}(x,\partial U)$
belong to $W_{K}$ is equivalent to the fact that $d_{K^{\circ}}(x,\partial U)$
is in $W_{0}^{1,p}(U)$ and its derivative has $\gamma_{K}$ norm
less than one. But $d_{K^{\circ}}(x,\partial U)$ is a Lipschitz function
that vanishes on the boundary of $U$. It also satisfies 
\[
d_{K^{\circ}}(x,\partial U)-d_{K^{\circ}}(y,\partial U)\leq\gamma_{K^{\circ}}(x-y).
\]
As proved by \citet{MR1797872}, this last property implies that the
$\gamma_{K}$ norm of the derivative of $d_{K^{\circ}}(x,\partial U)$
is less than or equal to $1$ a.e..

Now similarly to the proof of Lemma \ref{Lemma bdd grad}, we can
show that 
\[
|v(x)-c|\le d_{K^{\circ}}(x,\partial U)
\]
for all $v\in W_{K}$. Therefore $c\pm d_{K^{\circ}}(x,\partial U)$
minimize $J^{\pm}$ over $W_{K}$.
\end{proof}
The following theorem is the generalization of the result of \citet{MR1797872}.
We removed the assumptions on the derivatives of $g$, and allowed
$K$ to have empty interior.
\begin{thm}
\label{Thm equiv 2}Suppose $K$ is a compact convex set containing
$0$, and $u_{0}$ is the restriction to $U$ of a compactly supported
function in $W^{1,p}(\mathbb{R}^{n})$ with gradient a.e. in $K$.
Also, suppose $f,g$ are convex and at least one of them is strictly
convex. Then the minimizer of 
\[
J(v)=\int_{U}f(Dv(x))+g(v(x))\, dx
\]
over $W_{u^{-},u^{+}}$ is the same as its minimizer over $W_{K}$.\end{thm}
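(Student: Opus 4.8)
The plan is to show that the minimizer $u$ of $J$ over $W_{u^-,u^+}$ — which exists by the standing hypothesis on $J$, and is unique because $f$ or $g$ is strictly convex — actually satisfies $Du\in K$ a.e., i.e. $u\in W_K$. Since $W_K\subset W_{u^-,u^+}$ by the preceding lemma, this forces $J(u)=\min_{W_{u^-,u^+}}J\le\min_{W_K}J\le J(u)$, so $u$ is also the unique minimizer of $J$ over $W_K$, which is the assertion. By the converse of Lemma~\ref{Lemma bdd grad} — valid even when $\operatorname{int}K=\emptyset$, as in \citet{MR1797872} — it suffices to prove that for every $\tau\in\mathbb{R}^n$ the set
\[
E_\tau:=\{x\in U:\ x+\tau\in U,\ \ u(x)>u(x+\tau)+\gamma_{K^\circ}(-\tau)\}
\]
is Lebesgue-null: for then $u(x)\le u(x+\tau)+\gamma_{K^\circ}(-\tau)$ for a.e.\ $x$ and every $\tau$, which (after mollifying) is exactly the hypothesis of that converse.

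A preliminary observation is that $u_0$, $u^-$ and $u^+$ are $\gamma_{K^\circ}$-Lipschitz on $\overline U$, i.e. $\varphi(x)-\varphi(y)\le\gamma_{K^\circ}(x-y)$ for all $x,y\in\overline U$. For $u_0$ this is Lemma~\ref{Lemma bdd grad} applied to its given compactly supported extension $\tilde u_0\in W^{1,p}(\mathbb{R}^n)$, which has $D\tilde u_0\in K$ a.e. For $u^\pm$, extend $u^\pm$ to $\mathbb{R}^n$ by $\tilde u_0$ outside $U$; this is legitimate because $u^\pm-u_0\in W_0^{1,p}(U)$, the result is a compactly supported function in $W^{1,p}(\mathbb{R}^n)$, and its gradient lies a.e.\ in $K$ (inside $U$ since $u^\pm\in W_K$, outside since it equals $D\tilde u_0$), so Lemma~\ref{Lemma bdd grad} applies to it as well. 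Writing $c_\tau:=\gamma_{K^\circ}(-\tau)$, this gives in particular $u^-(x+\tau)\ge u^-(x)-c_\tau$, $u^+(x-\tau)\le u^+(x)+c_\tau$, and the analogous one-sided bounds for $u^+$ and $u_0$; recall also $u^-\le u\le u^+$ a.e., since $u\in W_{u^-,u^+}$.

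Now suppose for contradiction that $|E_\tau|>0$ for some $\tau$. Letting $U_{-\tau}:=\{x:x+\tau\in U\}$ and $U_\tau:=\{x:x-\tau\in U\}$, define competitors on $U$ by
\[
w(x):=\begin{cases}\min\{u(x),\,u(x+\tau)+c_\tau\},&x\in U_{-\tau},\\[2pt] u(x),&x\notin U_{-\tau},\end{cases}\qquad
w'(x):=\begin{cases}\max\{u(x),\,u(x-\tau)-c_\tau\},&x\in U_\tau,\\[2pt] u(x),&x\notin U_\tau.\end{cases}
\]
Using the $\gamma_{K^\circ}$-Lipschitz and Lipschitz bounds on $u^\pm,u_0$ together with $u^-\le u\le u^+$, one checks that $w$ and $w'$ glue across $\partial U_{\mp\tau}$ into functions in $W^{1,p}(U)$, that $u^-\le w\le u\le w'\le u^+$, and that their boundary trace is $u_0$; hence $w,w'\in W_{u^-,u^+}$. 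Moreover $\{w<u\}=E_\tau$ and $\{w'>u\}=E_\tau+\tau$, with $w=u(\cdot+\tau)+c_\tau$, $Dw=(Du)(\cdot+\tau)$ on the former and $w'=u(\cdot-\tau)-c_\tau$, $Dw'=(Du)(\cdot-\tau)$ on the latter. A direct computation of $J(w)-J(u)$ and $J(w')-J(u)$, followed by the substitution $x\mapsto x+\tau$ in the second, shows the $f$-contributions cancel identically, leaving
\[
[J(w)-J(u)]+[J(w')-J(u)]=\int_{E_\tau}\Big(\big[g(u(x+\tau)+c_\tau)-g(u(x+\tau))\big]-\big[g(u(x))-g(u(x)-c_\tau)\big]\Big)\,dx.
\]
On $E_\tau$ we have $u(x+\tau)\le u(x)-c_\tau$, and since $s\mapsto g(s+c_\tau)-g(s)$ is nondecreasing (convexity of $g$), the integrand is $\le0$. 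Hence $\min\{J(w),J(w')\}\le J(u)$; but $w,w'\in W_{u^-,u^+}$ gives the reverse inequalities, so one of $w,w'$ is again a minimizer of $J$ over $W_{u^-,u^+}$, and by uniqueness equals $u$ a.e. — contradicting $w<u$ on $E_\tau$ (resp.\ $w'>u$ on $E_\tau+\tau$), which has positive measure. Thus $|E_\tau|=0$ for all $\tau$, and the theorem follows.

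The one genuinely non-routine idea is the use of the \emph{pair} $w,w'$: a single one-sided truncation $\min\{u,u(\cdot+\tau)+c_\tau\}$ need not lower $J$, because $\int_{E_\tau}f((Du)(x+\tau))\,dx$ and $\int_{E_\tau}f(Du(x))\,dx$ bear no relation to each other; pairing it with the opposite truncation $w'$ cancels precisely these terms and reduces everything to the trivial monotonicity of difference quotients of the convex function $g$. The remaining technical burden is checking that $w,w'\in W_{u^-,u^+}$ near $\partial U$ — this is exactly where one uses that $u_0$, hence $u^\pm$, is $\gamma_{K^\circ}$-Lipschitz, and that $u^\pm$ are Lipschitz with $u^-\le u\le u^+$ — or, to sidestep boundary regularity altogether, one may prove $|E_\tau|=0$ only for $\tau$ in a small ball, using translations supported away from $\partial U$; this still forces $Du(x)\cdot v\le\gamma_{K^\circ}(v)$ for a.e.\ $x$ and all $v$, i.e. $Du\in K$ a.e.
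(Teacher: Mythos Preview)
Your argument is correct and uses the same pair of shifted truncation competitors as the paper (which in turn follows \citet{MR1797872}): your $w,w'$ are precisely the paper's $u_h^{-},u_h^{+}$ at $\lambda=1$. The substantive difference is in how the contradiction is extracted. The paper plugs in the convex interpolants $u+\lambda(u_h^{\pm}-u)$, sends $\lambda\to 0$, and lands on an inequality involving $g'$; this forces $g\in C^{1}$ with strictly increasing derivative, so the general case is then recovered by two approximation layers---mollifying $g$ and adding $\epsilon v^{2}$ to obtain smooth strict convexity, and fattening $K$ to $K_i=\{z:d(z,K)\le 1/i\}$ to obtain $0\in\operatorname{int}K$---followed by weak limits. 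You instead compare $J(w)+J(w')$ with $2J(u)$ directly, so the $f$-terms cancel exactly and the remaining $g$-terms are controlled by the monotonicity of $s\mapsto g(s+c_\tau)-g(s)$, which needs only convexity of $g$; the contradiction then comes from uniqueness of the minimizer (using the hypothesis that one of $f,g$ is strictly convex) rather than from a pointwise sign argument with $g'$. This lets you treat general convex $g$ and general $K\ni 0$ in one stroke, with no approximation. Your boundary and gluing checks (that $w,w'\in W_{u^-,u^+}$ and have trace $u_0$) are sketched, but they go through exactly as you indicate once one knows $u_0$ and $u^{\pm}$ are $\gamma_{K^\circ}$-nonexpansive on $\bar U$; the paper handles the analogous issue by extending $u,u^{\pm}$ by $u_0$ to all of $\mathbb{R}^n$, which is an equivalent device.
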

\begin{proof}
Note that the convexity assumptions on $f,g$ imply that the minimizer
of $J$ over any nonempty convex closed set is unique. Also the assumption
on $u_{0}$ implies $u_{0}(y)-u_{0}(x)\leq\gamma_{K^{\circ}}(y-x)$
for all $x,y\in U$, by Lemma \ref{Lemma bdd grad}. Let the minimizer
of $J$ over $W_{u^{-},u^{+}}$ be $u$. As $W_{K}\subset W_{u^{-},u^{+}}$,
it is enough to show that $u\in W_{K}$. 

First assume that $0$ is in the interior of $K$, and $g$ is $C^{1}$
with strictly increasing derivative. 

Similarly to \citet{MR1797872}, using $u_{0}$ we can extend $u^{\pm}$
and $u$ to all of $\mathbb{R}^{n}$ in a way that the gradient of
$u^{\pm}$ is still in $K$. Fix a nonzero vector $h\in\mathbb{R}^{n}$,
and define
\[
\begin{array}{c}
u_{h}^{+}(x):=\max\{u(x+h)-\gamma_{K^{\circ}}(h),u(x)\}\\
u_{h}^{-}(x):=\min\{u(x-h)+\gamma_{K^{\circ}}(h),u(x)\},
\end{array}
\]
and 
\[
\begin{array}{c}
E^{+}:=\{x\in\mathbb{R}^{n}\,\mid\, u_{h}^{+}(x)=u(x+h)-\gamma_{K^{\circ}}(h)>u(x)\}\\
E^{-}:=\{x\in\mathbb{R}^{n}\,\mid\, u_{h}^{-}(x)=u(x-h)+\gamma_{K^{\circ}}(h)<u(x)\}.
\end{array}
\]
The following assertions are easy to check

i) $u_{h}^{\pm}\in W_{u^{-},u^{+}}$.

ii) $E^{\pm}\backslash U$ have measure zero.

iii) $E^{+}=E^{-}-h$.

Now for any $0<\lambda<1$ we have ($i=1,\cdots,m$)
\begin{eqnarray}
 &  & J(u+\lambda(u_{h}^{+}-u))-J(u)=\int_{E^{+}}f(Du(x)+\lambda(Du(x+h)-Du(x)))\nonumber \\
 &  & -\, f(Du(x))+g(u(x)+\lambda(u(x+h)-\gamma_{K^{\circ}}(h)-u(x)))-g(u(x))\, dx\geq0,
\end{eqnarray}
and 
\begin{eqnarray}
 &  & J(u+\lambda(u_{h}^{-}-u))-J(u)=\int_{E^{-}}f(Du(x)+\lambda(Du(x-h)-Du(x)))\nonumber \\
 &  & -\, f(Du(x))+g(u(x)+\lambda(u(x-h)+\gamma_{K^{\circ}}(h)-u(x)))-g(u(x))\, dx\geq0.
\end{eqnarray}
By changing the variable from $x$ to $x+h$ in the last integral,
we get 
\begin{eqnarray}
 &  & \int_{E^{+}}f(Du(x+h)+\lambda(Du(x)-Du(x+h)))-f(Du(x+h))\nonumber \\
 &  & +\, g(u(x+h)+\lambda(u(x)+\gamma_{K^{\circ}}(h)-u(x+h)))-g(u(x+h))\, dx\geq0.
\end{eqnarray}
Adding this to the first integral and using the convexity of $f$,
we have 
\begin{eqnarray}
 &  & \int_{E^{+}}g(u(x+h)+\lambda(u(x)+\gamma_{K^{\circ}}(h)-u(x+h)))-g(u(x+h))\nonumber \\
 &  & +\, g(u(x)+\lambda(u(x+h)-\gamma_{K^{\circ}}(h)-u(x)))-g(u(x))\, dx\geq0.
\end{eqnarray}
We divide this inequality by $\lambda>0$ and take the limit as $\lambda\rightarrow0$.
Then, as $g$ is $C^{1}$ and $u$ is bounded, by Dominated Convergence
Theorem we get 
\begin{equation}
\int_{E^{+}}[g'(u(x+h))-g'(u(x))](u(x)-u(x+h)+\gamma_{K^{\circ}}(h))\, dx\geq0.
\end{equation}
But on $E^{+}$, $u(x)-u(x+h)+\gamma_{K^{\circ}}(h)<0$. Also $g'$
is strictly increasing and therefore $g'(u(x+h))-g'(u(x))>0$. Hence
$E^{+}$ must have measure zero. This means that for a.e. $x\in\mathbb{R}^{n}$
\[
u(x+h)-u(x)\leq\gamma_{K^{\circ}}(h).
\]
Taking $h\rightarrow0$ (through a countable sequence) we get 
\[
D_{h}u(x)\leq\gamma_{K^{\circ}}(h).
\]
Which implies $\gamma_{K}(Du(x))\leq1$, and this is equivalent to
$u\in W_{K}$.

Now suppose that we only have $0\in K$. Let 
\[
K_{i}:=\{x+y\,\mid\, x\in K\,,\,|y|\leq\frac{1}{i}\}=\{z\,\mid\, d(z,K)\leq\frac{1}{i}\}.
\]
Then $\{K_{i}\}$ is a decreasing family of compact convex sets containing
$K$ with $0\in\textrm{int }K_{i}$. Therefore $\{W_{K_{i}}\}$ is
also a decreasing family containing $W_{K}$. Let $u_{i}^{\pm}$ be
the corresponding obstacles to $W_{K_{i}}$. Then we have $u_{i}^{+}\geq u^{+}$
and $u_{i}^{-}\leq u^{-}$. Also $u_{i}^{+}$ decreases with $i$,
and $u_{i}^{-}$ increases with $i$. Thus $\{W_{u_{i}^{-},u_{i}^{+}}\}$
is a decreasing family too and contains $W_{u^{-},u^{+}}$. 

Let $u_{i}$ be the minimizer of $J$ over $W_{K_{i}}$. We have $Du_{0}\in K\subset K_{i}$.
Therefore we can apply the previous argument and we have $J(u_{i})\leq J(v)$
for all $v\in W_{u_{i}^{-},u_{i}^{+}}\supset W_{u^{-},u^{+}}$. Now
as $u_{i}$'s are all in $W_{K_{1}}$ we have $\|u_{i}\|_{W^{1,p}}<C$
for some universal $C$.

Therefore there is a subsequence of $u_{i}$'s, where we denote it
by $u_{i_{k}}$, which converges weakly in $u_{0}+W_{0}^{1,p}(U)$
to $u$. By weak lower semicontinuity of $J$ we get $J(u)\leq\liminf J(u_{i_{k}})\leq J(v)$
for all $v\in W_{u^{-},u^{+}}$. Thus to finish the proof we only
need to show that $u\in W_{K}$. To see this note that the sequence
$u_{i_{k}}$ is eventually in each $W_{K_{i_{k}}}$ and as these are
closed convex sets they are weakly closed, hence $u\in W_{K_{i_{k}}}$
for all $k$. This means $d(Du,K)\leq\frac{1}{i_{k}}$ a.e.. Thus
$d(Du,K)=0$ a.e., and by closedness of $K$ we get the desired result.

Next suppose that $g$ is only convex. Consider the mollifications
$g_{\epsilon}:=\eta_{\epsilon}\star g$, where $\eta_{\epsilon}$
is the standard mollifier. First let us show that $g_{\epsilon}$
is convex too. We have 
\begin{eqnarray*}
g_{\epsilon}(\lambda x+(1-\lambda)y) &  & =\int\eta_{\epsilon}(z)g(\lambda x+(1-\lambda)y-z)\, dz\\
 &  & \leq\int\eta_{\epsilon}(z)[\lambda g(x-z)+(1-\lambda)g(y-z)]\, dz\\
 &  & \leq\lambda g_{\epsilon}(x)+(1-\lambda)g_{\epsilon}(y).
\end{eqnarray*}
Now let 
\[
J_{i}(v):=\int_{U}f(Dv)+g_{\frac{1}{i}}(v)+\frac{1}{i}v^{2}\, dx.
\]
Then since $g_{\epsilon}(v)+\epsilon v^{2}$ is a smooth strictly
convex function, it has strictly increasing derivative. Let $u_{i}$
be the minimizer of $J_{i}$ over $W_{K}$. Then by the above we have
$J_{i}(u_{i})\leq J_{i}(v)$ for all $v\in W_{u^{-},u^{+}}$. As the
$u_{i}$'s are in $W_{K}$, and $W_{K}$ is bounded in $W^{1,p}$,
we can say that there is a subsequence of $u_{i}$, which we continue
to denote it by $u_{i}$, that converges weakly to $u\in W_{K}$. 

Since $g_{\epsilon}$ uniformly converges to $g$ on compact sets,
and for $v\in W_{u^{-},u^{+}}$ we have $\Vert v\Vert_{L^{\infty}}<C$
for some constant $C$ independent of $v$, we have for $\epsilon$
small enough and independent of $v$ 
\begin{equation}
|J_{i}(v)-J(v)|\leq\int_{U}|g_{\frac{1}{i}}(v)-g(v)|+\frac{1}{i}v^{2}\, dx<\delta,
\end{equation}
for $i$ large enough. Hence $J(u_{i})\leq J(v)+2\delta.$ Then by
weak lower semicontinuity of $J$ we have $J(u)\leq\liminf J(u_{i})\leq J(v)+2\delta$.
Since $\delta$ is arbitrary we get that $u$ is the minimizer of
$J$ over $W_{u^{-},u^{+}}$ as required.\end{proof}
\begin{rem}
We can also prove a version of this theorem when $0\notin K$, by
translating $K$. But we need to have a bound on the distance of $K$
and the origin.
\end{rem}

\section{The Equivalence in The Vector-Valued Case}

Suppose $K\subset\mathbb{R}^{n}$ is a balanced compact convex set
whose interior contains $0$. Also suppose that $\boldsymbol{\eta}\in\mathbb{R}^{N}$
is a fixed nonzero vector. Consider the following problems of minimizing
\begin{equation}
I(\mathbf{v}):=\int_{U}|D\mathbf{v}|^{2}-\boldsymbol{\eta}\cdot\mathbf{v}\, dx
\end{equation}
over 
\begin{equation}
K_{1}:=\{\mathbf{v}=(v^{1},\cdots,v^{N})\in H_{0}^{1}(U;\mathbb{R}^{N})\,\mid\,\|D\mathbf{v}\|_{2,K}\leq1\textrm{ a.e.}\},
\end{equation}
and over 
\begin{equation}
K_{2}:=\{\mathbf{v}=(v^{1},\cdots,v^{N})\in H_{0}^{1}(U;\mathbb{R}^{N})\,\mid\,|\mathbf{v}(x)|\leq d_{K}(x,\partial U)\textrm{ a.e.}\}.
\end{equation}
Where 
\begin{equation}
\|A\|_{2,K}:=\underset{z\ne0}{\sup}\,\frac{|Az|}{\gamma_{K}(z)}
\end{equation}
for an $n\times n$ matrix $A$, and $\gamma_{K},d_{K}$ are respectively
the norm associated to $K$ and the metric of that norm. We show that
these problems are equivalent. 

As both $K_{1},K_{2}$ are closed convex sets and $I$ is coercive,
bounded and weakly sequentially lower semicontinuous, both problems
have unique solution. 
\begin{lem}
We have 
\[
K_{1}\subseteq K_{2}.
\]
\end{lem}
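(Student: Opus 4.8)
The plan is to reduce the vector-valued pointwise bound that defines $K_2$ to a one-parameter family of scalar gradient constraints, one for each direction $\mathbf{a}$ in the target $\mathbb{R}^N$, and then to feed each of these into Lemma \ref{Lemma bdd grad} with $K$ replaced by its polar $K^{\circ}$. Concretely, I would fix $\mathbf{v}\in K_1$ and a unit vector $\mathbf{a}\in\mathbb{R}^N$, and set $w:=\mathbf{a}\cdot\mathbf{v}\in H_0^1(U)$. The point of the constraint $\|D\mathbf{v}\|_{2,K}\le 1$ is that for a.e.\ $x$ one has $|D\mathbf{v}(x)z|\le\gamma_K(z)$ for every $z\in\mathbb{R}^n$, where $D\mathbf{v}(x)$ is the $N\times n$ Jacobian matrix. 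Since $Dw(x)\cdot z=\mathbf{a}\cdot\bigl(D\mathbf{v}(x)z\bigr)\le|D\mathbf{v}(x)z|\le\gamma_K(z)$, taking $z$ to range over $K$ (where $\gamma_K\le 1$) shows that $Dw(x)\in K^{\circ}$ for a.e.\ $x$. Thus each directional projection of $\mathbf{v}$ satisfies a gradient constraint, but with the polar body.

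Next I would extend $w$ by zero to all of $\mathbb{R}^n$; since $w\in H_0^1(U)$ and $U$ is bounded, this extension is a compactly supported element of $W^{1,2}(\mathbb{R}^n)$, and because $0\in K^{\circ}$ its gradient still lies in $K^{\circ}$ a.e. Now $K$ is a balanced compact convex set with $0$ in its interior, so $K^{\circ}$ has the same properties and $(K^{\circ})^{\circ}=K$. Applying Lemma \ref{Lemma bdd grad} to the extended $w$, with $K^{\circ}$ playing the role of $K$, yields (after the redefinition on a null set described in that lemma)
\[
w(y)-w(x)\le\gamma_{(K^{\circ})^{\circ}}(y-x)=\gamma_K(y-x)\qquad\text{for all }x,y\in\mathbb{R}^n.
\]
Fixing $x\in U$ and letting $y$ range over $\mathbb{R}^n\setminus U$, where the extension vanishes, and using this inequality for both $w(x)-w(y)$ and $w(y)-w(x)$ together with $\gamma_K(-\,\cdot)=\gamma_K(\cdot)$, I get $|w(x)|\le\gamma_K(x-y)$; taking the infimum over such $y$ gives $|\mathbf{a}\cdot\mathbf{v}(x)|\le d_K(x,\partial U)$ for a.e.\ $x\in U$.

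Finally, to pass from one direction to the full vector I would choose a countable dense set $\{\mathbf{a}_k\}$ of unit vectors, apply the previous step to each $\mathbf{a}_k$, intersect the resulting full-measure sets, and use $|\mathbf{v}(x)|=\sup_k|\mathbf{a}_k\cdot\mathbf{v}(x)|$ to conclude $|\mathbf{v}(x)|\le d_K(x,\partial U)$ a.e., i.e.\ $\mathbf{v}\in K_2$. The only delicate point is the identification $\inf\{\gamma_K(x-y):y\in\mathbb{R}^n\setminus U\}=d_K(x,\partial U)$ for interior $x$, together with the assertion that the zero extension of $w$ really vanishes on (the relevant part of) $\partial U$: the first follows from a standard connectedness argument — any $\gamma_K$-minimizing path from an interior point $x$ to $U^{c}$ meets $\partial U$ no later than it leaves $U$ — and the second is exactly the extension/redefinition situation already treated in the proof of Lemma \ref{Lemma bdd grad}. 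Everything else is routine, so I expect this to be a short argument whose main content is the observation that $\|D\mathbf{v}\|_{2,K}\le1$ forces each scalar component $\mathbf{a}\cdot\mathbf{v}$ into the polar gradient constraint.
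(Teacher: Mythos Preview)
Your argument is correct, but it takes a slightly more roundabout path than the paper's. The paper does not project onto target directions $\mathbf{a}\in\mathbb{R}^N$; instead it re-runs the mollification argument of Lemma~\ref{Lemma bdd grad} directly on the vector-valued map $\mathbf{v}$, using the definition of $\|\cdot\|_{2,K}$ to write
\[
|\mathbf{v}_\epsilon(y)-\mathbf{v}_\epsilon(x)|\le\int_0^1 |D\mathbf{v}_\epsilon(x+t(y-x))\,(y-x)|\,dt\le\int_0^1 \|D\mathbf{v}_\epsilon\|_{2,K}\,\gamma_K(y-x)\,dt\le\gamma_K(y-x),
\]
then redefines $\mathbf{v}$ on a null set to make it continuous and evaluates at the $d_K$-nearest boundary point. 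Your reduction to the scalar lemma via the observation $Dw\in K^{\circ}$ (with $w=\mathbf{a}\cdot\mathbf{v}$) and the bipolar identity $(K^{\circ})^{\circ}=K$ is a nice way to invoke Lemma~\ref{Lemma bdd grad} as a black box rather than repeating its proof, at the cost of the extra polarity bookkeeping and the countable-density step at the end. The paper's direct vector estimate is shorter and avoids both of those, but yours makes the role of the polar body more explicit.
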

\begin{proof}
To see this let $\mathbf{v}\in K_{1}$. Similarly to the proof of
Lemma \ref{Lemma bdd grad} we obtain 
\begin{equation}
|\mathbf{v}(y)-\mathbf{v}(x)|\leq\gamma_{K}(y-x)
\end{equation}
for a.e. $x,y$. Using this relation we can redefine $\mathbf{v}$
on a set of measure zero the same way that we extend Lipschitz functions.
Therefore we can assume that $\mathbf{v}$ is continuous. Now as $\mathbf{v}$
is $0$ on $\partial U$, we can choose $x$ to be the closest point
on $\partial U$ to $y$ with respect to $d_{K}$, and get the desired
result.\end{proof}
\begin{lem}
Let $\mathbf{u}=(u^{1},\cdots,u^{N})$ be the minimizer of $I$ over
$K_{2}$, and let 
\[
T=(T_{l}^{k}):\mathbb{R}^{N}\to\mathbb{R}^{N}
\]
be an orthogonal linear map that fixes $\boldsymbol{\eta}$. Then
$T\mathbf{u}\in K_{2}$ and 
\begin{equation}
I(T\mathbf{u})=I(\mathbf{u}).
\end{equation}
\end{lem}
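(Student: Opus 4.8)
The plan is to verify directly that composing with $T$ preserves membership in $K_{2}$ and leaves each of the two terms of $I$ unchanged, using only that $T$ is orthogonal and that $T\boldsymbol{\eta}=\boldsymbol{\eta}$.

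First I would check $T\mathbf{u}\in K_{2}$. Since $T$ is a constant-coefficient linear map it maps $H_{0}^{1}(U;\mathbb{R}^{N})$ into itself, and orthogonality gives $|T\mathbf{u}(x)|=|\mathbf{u}(x)|$ for a.e.\ $x$; as $\mathbf{u}\in K_{2}$ this yields $|T\mathbf{u}(x)|\leq d_{K}(x,\partial U)$ a.e., so $T\mathbf{u}\in K_{2}$.

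Next I would handle the energy. Writing $D\mathbf{u}$ for the $N\times n$ matrix $(\partial_{j}u^{k})$, the components of $T\mathbf{u}$ are $(T\mathbf{u})^{k}=\sum_{l}T_{l}^{k}u^{l}$, hence $\partial_{j}(T\mathbf{u})^{k}=\sum_{l}T_{l}^{k}\partial_{j}u^{l}$, i.e.\ $D(T\mathbf{u})=T\,D\mathbf{u}$ as a matrix product. Using $|A|^{2}=\mathrm{tr}(A^{t}A)$ and $T^{t}T=\mathrm{Id}$ one gets $|D(T\mathbf{u})|^{2}=\mathrm{tr}\big((D\mathbf{u})^{t}T^{t}T\,D\mathbf{u}\big)=|D\mathbf{u}|^{2}$ a.e. For the linear term, $T$ orthogonal with $T\boldsymbol{\eta}=\boldsymbol{\eta}$ forces $T^{t}\boldsymbol{\eta}=T^{-1}\boldsymbol{\eta}=\boldsymbol{\eta}$, so $\boldsymbol{\eta}\cdot T\mathbf{u}(x)=(T^{t}\boldsymbol{\eta})\cdot\mathbf{u}(x)=\boldsymbol{\eta}\cdot\mathbf{u}(x)$. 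Integrating over $U$ then gives $I(T\mathbf{u})=I(\mathbf{u})$.

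I do not expect a genuine obstacle here: the only slightly delicate points are the identity $D(T\mathbf{u})=T\,D\mathbf{u}$ for Sobolev functions and the membership $T\mathbf{u}\in H_{0}^{1}(U;\mathbb{R}^{N})$, and both are immediate since $T$ has constant entries. (Presumably the lemma will be combined with the uniqueness of the minimizer of $I$ over $K_{2}$: since $I(T\mathbf{u})=I(\mathbf{u})$ and $T\mathbf{u}\in K_{2}$, one must have $T\mathbf{u}=\mathbf{u}$ for every orthogonal $T$ fixing $\boldsymbol{\eta}$, which forces $\mathbf{u}(x)$ to be parallel to $\boldsymbol{\eta}$ a.e.\ and reduces the vector-valued problem to a scalar one.)
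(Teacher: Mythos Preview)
Your proof is correct and follows essentially the same approach as the paper: verify $T\mathbf{u}\in K_{2}$ via $|T\mathbf{u}|=|\mathbf{u}|$, show $|D(T\mathbf{u})|^{2}=|D\mathbf{u}|^{2}$ from orthogonality, and use $T\boldsymbol{\eta}=\boldsymbol{\eta}$ together with orthogonality to get $\boldsymbol{\eta}\cdot T\mathbf{u}=\boldsymbol{\eta}\cdot\mathbf{u}$. The only cosmetic difference is that you package the Dirichlet-energy step with the trace identity $|A|^{2}=\mathrm{tr}(A^{t}A)$ while the paper writes it out in components, and you phrase the linear term via $T^{t}\boldsymbol{\eta}=\boldsymbol{\eta}$ rather than $T\boldsymbol{\eta}\cdot T\mathbf{u}=\boldsymbol{\eta}\cdot\mathbf{u}$; these are equivalent.
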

\begin{proof}
To see this note that $T\mathbf{u}\in H_{0}^{1}(U;\mathbb{R}^{N})$
and as $T$ preserves the norm, for a.e. $x$ we have 
\begin{equation}
|T\mathbf{u}(x)|=|\mathbf{u}(x)|\leq d_{K}(x,\partial U).
\end{equation}
Furthermore as $T$ is orthogonal we have 
\begin{equation}
|DT\mathbf{u}|^{2}=\underset{i}{\sum}\underset{k}{\sum}(T_{l}^{k}D_{i}u^{l})^{2}=\underset{i}{\sum}\underset{l}{\sum}(D_{i}u^{l})^{2}=|D\mathbf{u}|^{2}.
\end{equation}
Hence (since $T\boldsymbol{\eta}=\boldsymbol{\eta}$ and $T$ is orthogonal)
\begin{eqnarray*}
 & I(T\mathbf{u}) & =\int_{U}|DT\mathbf{u}|^{2}-\boldsymbol{\eta}\cdot T\mathbf{u}\, dx\\
 &  & =\int_{U}|D\mathbf{u}|^{2}-T\boldsymbol{\eta}\cdot T\mathbf{u}\, dx\\
 &  & =\int_{U}|D\mathbf{u}|^{2}-\boldsymbol{\eta}\cdot\mathbf{u}\, dx=I(\mathbf{u}).
\end{eqnarray*}
\end{proof}
\begin{thm}
\label{vector-to-scalar}We have 
\begin{equation}
\mathbf{u}(x)=u(x)\boldsymbol{\eta},
\end{equation}
where $u$ is the minimizer of 
\begin{equation}
J_{1}(v):=\int_{U}|Dv|^{2}-v\, dx
\end{equation}
over 
\begin{equation}
K_{3}:=\{v\in H_{0}^{1}(U;\mathbb{R})\,\mid\,|v(x)|\leq\frac{1}{|\boldsymbol{\eta}|}d_{K}(x,\partial U)\textrm{ a.e.}\}.
\end{equation}
\end{thm}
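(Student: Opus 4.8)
The plan is to first show that the minimizer $\mathbf{u}$ of $I$ over $K_{2}$ is pointwise a.e.\ parallel to $\boldsymbol{\eta}$, and then to extract the scalar problem by substituting competitors of the form $\mathbf{v}=v\boldsymbol{\eta}$.

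For the first step I would use the rank-one orthogonal projection $Px:=\frac{x\cdot\boldsymbol{\eta}}{|\boldsymbol{\eta}|^{2}}\boldsymbol{\eta}$ onto the line $\mathbb{R}\boldsymbol{\eta}$, applied pointwise to vector fields. (One could instead average over the subgroup of $O(N)$ fixing $\boldsymbol{\eta}$ using the preceding lemma, but the projection is more direct.) Given any $\mathbf{v}\in K_{2}$, I claim $P\mathbf{v}\in K_{2}$ and $I(P\mathbf{v})\leq I(\mathbf{v})$: indeed $P\mathbf{v}\in H_{0}^{1}(U;\mathbb{R}^{N})$, and since $P$ has operator norm $1$ we have $|P\mathbf{v}(x)|\leq|\mathbf{v}(x)|\leq d_{K}(x,\partial U)$ a.e.; writing $D\mathbf{v}$ column-wise as $(D_{1}\mathbf{v},\dots,D_{n}\mathbf{v})$ with $D_{i}\mathbf{v}\in\mathbb{R}^{N}$, one has $|DP\mathbf{v}|^{2}=\sum_{i}|PD_{i}\mathbf{v}|^{2}\leq\sum_{i}|D_{i}\mathbf{v}|^{2}=|D\mathbf{v}|^{2}$ a.e.; and $\boldsymbol{\eta}\cdot P\mathbf{v}=(P\boldsymbol{\eta})\cdot\mathbf{v}=\boldsymbol{\eta}\cdot\mathbf{v}$ since $P$ is symmetric and $P\boldsymbol{\eta}=\boldsymbol{\eta}$. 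Hence $I(P\mathbf{v})\leq I(\mathbf{v})$. Taking $\mathbf{v}=\mathbf{u}$ and invoking the \emph{uniqueness} of the minimizer of $I$ over $K_{2}$ forces $P\mathbf{u}=\mathbf{u}$, i.e.\ $\mathbf{u}(x)\in\mathbb{R}\boldsymbol{\eta}$ for a.e.\ $x$. Thus $\mathbf{u}=u\boldsymbol{\eta}$, where $u:=|\boldsymbol{\eta}|^{-2}\,\mathbf{u}\cdot\boldsymbol{\eta}\in H_{0}^{1}(U;\mathbb{R})$.

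For the second step, note that for a scalar $v\in H_{0}^{1}(U;\mathbb{R})$ we have $v\boldsymbol{\eta}\in K_{2}$ if and only if $|v(x)|\,|\boldsymbol{\eta}|\leq d_{K}(x,\partial U)$ a.e., i.e.\ if and only if $v\in K_{3}$; and $I(v\boldsymbol{\eta})=\int_{U}|\boldsymbol{\eta}|^{2}|Dv|^{2}-|\boldsymbol{\eta}|^{2}v\,dx=|\boldsymbol{\eta}|^{2}J_{1}(v)$. In particular $u\in K_{3}$, and for every $v\in K_{3}$,
\[
|\boldsymbol{\eta}|^{2}J_{1}(u)=I(u\boldsymbol{\eta})=I(\mathbf{u})\leq I(v\boldsymbol{\eta})=|\boldsymbol{\eta}|^{2}J_{1}(v),
\]
so $u$ minimizes $J_{1}$ over $K_{3}$. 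Since $K_{3}$ is nonempty, closed and convex and $J_{1}$ is strictly convex and coercive on $H_{0}^{1}(U;\mathbb{R})$, this minimizer is unique, hence $u$ is the function in the statement and $\mathbf{u}=u\boldsymbol{\eta}$.

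I expect the only real point requiring care to be the first step: the observation that projecting onto $\mathbb{R}\boldsymbol{\eta}$ keeps the competitor admissible while not increasing $I$, and in particular the pointwise inequality $|DP\mathbf{v}|\leq|D\mathbf{v}|$ together with the exact invariance of the linear term. Once $\mathbf{u}\parallel\boldsymbol{\eta}$ is established, everything else is a direct substitution. (If one instead uses the orthogonal-group lemma, the minor technicality is to upgrade "$T\mathbf{u}=\mathbf{u}$ a.e.\ for each fixed $T$" to "$\mathbf{u}(x)$ is fixed by the entire stabilizer of $\boldsymbol{\eta}$ for a.e.\ $x$," which is handled by working with a countable dense subset of that stabilizer.)
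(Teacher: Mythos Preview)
Your proof is correct, and the second step matches the paper essentially verbatim. The first step, however, is a genuinely different (and somewhat cleaner) route. The paper invokes the preceding lemma: for every orthogonal $T$ fixing $\boldsymbol{\eta}$ one has $T\mathbf{u}\in K_{2}$ and $I(T\mathbf{u})=I(\mathbf{u})$, so by uniqueness $T\mathbf{u}=\mathbf{u}$ for all such $T$, whence $\mathbf{u}(x)\in\mathbb{R}\boldsymbol{\eta}$. You instead use a single linear map, the orthogonal projection $P$ onto $\mathbb{R}\boldsymbol{\eta}$, and only need the inequality $I(P\mathbf{u})\leq I(\mathbf{u})$ together with uniqueness to force $P\mathbf{u}=\mathbf{u}$. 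Your approach sidesteps the small measure-theoretic issue you yourself flag (passing from ``$T\mathbf{u}=\mathbf{u}$ a.e.\ for each fixed $T$'' to a pointwise statement for the whole stabilizer), and it does not rely on the symmetry lemma at all. The paper's approach, on the other hand, isolates a reusable invariance principle and yields exact equality $I(T\mathbf{u})=I(\mathbf{u})$ rather than just an inequality, which is conceptually a bit stronger even if not needed here.
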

\begin{proof}
By the above lemma and uniqueness of the minimizer, we must have $T\mathbf{u}=\mathbf{u}$
for all orthogonal linear maps $T$ that fix $\boldsymbol{\eta}$.
This implies that $\mathbf{u}(x)=u(x)\boldsymbol{\eta}$ for some
scalar function $u$. Now we have 
\[
|u(x)\boldsymbol{\eta}|=|\mathbf{u}|\leq d_{K}(x,\partial U).
\]
Hence for a.e. $x$ 
\begin{equation}
|u(x)|\leq\frac{1}{|\boldsymbol{\eta}|}d_{K}(x,\partial U).
\end{equation}

Also we have 
\[
D_{i}\mathbf{u}=D_{i}u\boldsymbol{\eta}.
\]
Thus 
\begin{equation}
I(\mathbf{u})=\int_{U}|\boldsymbol{\eta}|^{2}|Du|^{2}-|\boldsymbol{\eta}|^{2}u\, dx=|\boldsymbol{\eta}|^{2}\int_{U}|Du|^{2}-u\, dx=|\boldsymbol{\eta}|^{2}J_{1}(u).
\end{equation}
It is easy to see that $u$ is the minimizer of $J_{1}$ over $K_{3}$.
Because for any $w\in K_{3}$ we have $w\boldsymbol{\eta}\in K_{2}$,
therefore 
\[
J_{1}(u)=|\boldsymbol{\eta}|^{-2}I(u\boldsymbol{\eta})=|\boldsymbol{\eta}|^{-2}I(\mathbf{u})\leq|\boldsymbol{\eta}|^{-2}I(w\boldsymbol{\eta})=J_{1}(w).
\]
\end{proof}
\begin{thm}
The minimizer of $I$ over $K_{2}$ is the same as its minimizer over
$K_{1}$.\end{thm}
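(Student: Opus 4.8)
The plan is to show that the minimizer $\mathbf{u}$ of $I$ over $K_{2}$ actually lies in $K_{1}$; since $K_{1}\subseteq K_{2}$ by the first lemma of this section, this forces the two minimizers to coincide. By Theorem \ref{vector-to-scalar} we know $\mathbf{u}(x)=u(x)\boldsymbol{\eta}$, where $u$ minimizes $J_{1}(v)=\int_{U}|Dv|^{2}-v\,dx$ over $K_{3}=\{v\in H_{0}^{1}(U)\mid |v|\le \tfrac{1}{|\boldsymbol{\eta}|}d_{K}(x,\partial U)\}$. Since $\|D\mathbf{u}\|_{2,K}=\|Du\,\boldsymbol{\eta}\|_{2,K}=|\boldsymbol{\eta}|\,\gamma_{K^{\circ}}^{*}\!(Du)$ — more precisely $\|Du\,\boldsymbol{\eta}\otimes(\cdot)\|_{2,K}=|\boldsymbol{\eta}|\sup_{z\ne0}\frac{|Du\cdot z|}{\gamma_{K}(z)}=|\boldsymbol{\eta}|\,\gamma_{K^{\circ}}(Du)$ — the condition $\mathbf{u}\in K_{1}$ is equivalent to $\gamma_{K^{\circ}}(Du)\le \tfrac{1}{|\boldsymbol{\eta}|}$ a.e., i.e. $|\boldsymbol{\eta}|\,Du\in K^{\circ}$ a.e. So the whole theorem reduces to the scalar claim: the minimizer $u$ of $J_{1}$ over the obstacle set $K_{3}$ satisfies the gradient bound $\gamma_{K^{\circ}}(Du)\le\tfrac{1}{|\boldsymbol{\eta}|}$ a.e.

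This is now exactly the type of statement handled in Section 2: an obstacle problem whose solution automatically satisfies a gradient constraint. I would invoke Theorem \ref{Thm equiv 2} with the roles set up as follows. Rescale by putting $\tilde K=\tfrac{1}{|\boldsymbol{\eta}|}K^{\circ}$, so that $\gamma_{\tilde K}=|\boldsymbol{\eta}|\,\gamma_{K^{\circ}}$ and $\tilde K^{\circ}=|\boldsymbol{\eta}|K$, hence $\gamma_{\tilde K^{\circ}}=\tfrac{1}{|\boldsymbol{\eta}|}\gamma_{K}$ and $d_{\tilde K^{\circ}}(\cdot,\partial U)=\tfrac{1}{|\boldsymbol{\eta}|}d_{K}(\cdot,\partial U)$. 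Since $K$ is a balanced compact convex set with $0$ in its interior, so is $K^{\circ}$, and thus $\tilde K$ has $0$ in its interior. Take $f(p)=|p|^{2}$ (strictly convex), $g(t)=-t$ (convex), $u_{0}=0$ (which is the restriction of a compactly supported $W^{1,p}$ function with gradient $0\in\tilde K$), and $p=2$. By the Theorem characterizing $u^{\pm}$ when $u_{0}\equiv c$, the obstacles for $W_{\tilde K}$ are $u^{\pm}(x)=\pm d_{\tilde K^{\circ}}(x,\partial U)=\pm\tfrac{1}{|\boldsymbol{\eta}|}d_{K}(x,\partial U)$, so $W_{u^{-},u^{+}}=K_{3}$. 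Theorem \ref{Thm equiv 2} then gives that the minimizer of $J_{1}$ over $K_{3}=W_{u^{-},u^{+}}$ equals its minimizer over $W_{\tilde K}=\{v\in H_{0}^{1}(U)\mid Dv\in\tilde K\text{ a.e.}\}$; in particular $u\in W_{\tilde K}$, i.e. $|\boldsymbol{\eta}|\,Du\in K^{\circ}$ a.e., which is precisely $\mathbf{u}\in K_{1}$.

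Finishing up: once $\mathbf{u}\in K_{1}$, for every $\mathbf{v}\in K_{1}$ we have $\mathbf{v}\in K_{2}$, so $I(\mathbf{u})\le I(\mathbf{v})$ because $\mathbf{u}$ minimizes $I$ over the larger set $K_{2}$; hence $\mathbf{u}$ minimizes $I$ over $K_{1}$, and by uniqueness of the minimizer over $K_{1}$ it \emph{is} that minimizer. Conversely the minimizer over $K_{1}$ then lies in $K_{2}$ and is dominated by $\mathbf{u}$ there, so both problems have the same solution.

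The main obstacle is the bookkeeping in the first paragraph: one must verify carefully that $\|D(u\boldsymbol{\eta})\|_{2,K}=|\boldsymbol{\eta}|\,\gamma_{K^{\circ}}(Du)$ pointwise a.e. For a rank-one matrix $A=\boldsymbol{\eta}\otimes q$ (i.e. $Az=(q\cdot z)\boldsymbol{\eta}$) one computes $\|A\|_{2,K}=\sup_{z\ne0}\frac{|q\cdot z|\,|\boldsymbol{\eta}|}{\gamma_{K}(z)}=|\boldsymbol{\eta}|\sup_{z\ne0}\frac{q\cdot z}{\gamma_{K}(z)}=|\boldsymbol{\eta}|\,\gamma_{K^{\circ}}(q)$, using that $K$ is balanced (so the sup over $\pm z$ agrees) together with the standard duality $\gamma_{K^{\circ}}(q)=\sup_{\gamma_{K}(z)\le 1}q\cdot z$ recalled after the definition of the polar. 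With $q=Du(x)$ this gives the claimed identity, and the rest is a matter of matching constants through the rescaling $\tilde K=\tfrac{1}{|\boldsymbol{\eta}|}K^{\circ}$. Everything else is a direct application of results already established earlier in the paper.
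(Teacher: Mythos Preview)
Your proof is correct and follows essentially the same route as the paper: reduce to the scalar minimizer $u$ via Theorem~\ref{vector-to-scalar}, invoke the Section~2 equivalence (Theorem~\ref{Thm equiv 2} together with the characterization $u^{\pm}=\pm d_{\tilde K^{\circ}}(\cdot,\partial U)$) to conclude $\gamma_{K^{\circ}}(Du)\le\tfrac{1}{|\boldsymbol{\eta}|}$, and then check that this gradient bound translates to $\|D\mathbf{u}\|_{2,K}\le 1$. The only cosmetic difference is that the paper bounds $|D\mathbf{u}\cdot z|$ via the inequality $|Du\cdot z|\le\gamma_{K^{\circ}}(Du)\gamma_{K}(z)$ rather than computing the rank-one norm exactly as you do; your version is in fact slightly sharper and makes the equivalence $\mathbf{u}\in K_{1}\Longleftrightarrow u\in K_{4}$ transparent.
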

\begin{proof}
By the above theorem 
\[
\mathbf{u}(x)=u(x)\boldsymbol{\eta},
\]
where $u$ is the minimizer of $J_{1}$ over $K_{3}$. But we know
that the minimizer of $J_{1}$ over $K_{3}$ is the same as its minimizer
over
\begin{equation}
K_{4}:=\{v\in H_{0}^{1}(U;\mathbb{R})\,\mid\,\gamma_{K^{\circ}}(Dv)\leq\frac{1}{|\boldsymbol{\eta}|}\textrm{ a.e.}\}.
\end{equation}
Therefore for all $z\in\mathbb{R}^{n}$, we have a.e. 
\begin{eqnarray*}
 & |D\mathbf{u}\cdot z|^{2} & =\underset{l}{\sum}\underset{i}{\sum}(D_{i}u^{l}z^{i})^{2}=\underset{l}{\sum}\underset{i}{\sum}(D_{i}u\eta^{l}z^{i})^{2}\\
 &  & =\underset{i}{\sum}(D_{i}uz^{i})^{2}\underset{l}{\sum}(\eta^{l})^{2}=|\boldsymbol{\eta}|^{2}|Du\cdot z|^{2}\\
 &  & \le|\boldsymbol{\eta}|^{2}\gamma_{K^{\circ}}(Du)^{2}\gamma_{K}(z)^{2}\leq\gamma_{K}(z)^{2}.
\end{eqnarray*}
This means that 
\begin{equation}
\|D\mathbf{u}\|_{2,K}\leq1\qquad\textrm{ a.e.}.
\end{equation}
Hence $\mathbf{u}\in K_{1}$. Since $K_{1}\subseteq K_{2}$, $\mathbf{u}$
is also the minimizer of $I$ over $K_{1}$.
\end{proof}

\section{The Optimal Regularity}

Let 
\[
J_{\eta}(v):=\int_{U}\frac{1}{2}|Dv|^{2}-\eta v\, dx.
\]
Suppose $K\subset\mathbb{R}^{n}$ is a balanced compact convex set
whose interior contains $0$. Let $u$ be the minimizer of $J_{\eta}$
over 
\[
W_{K}:=\{v\in c+H_{0}^{1}(U)\,\mid\,\gamma_{K}(Dv)\leq k\textrm{ a.e.}\},
\]
where $c,k$ are constants and $\gamma_{K}$ is the gauge function
of $K$. We showed that $u$ is also the minimizer of $J_{\eta}$
over 
\[
\{v\in c+H_{0}^{1}(U)\,\mid\, c-kd_{K^{\circ}}(x,\partial U)\leq v(x)\leq c+kd_{K^{\circ}}(x,\partial U)\textrm{ a.e.}\},
\]
where $K^{\circ}$ is the polar of $K$, and $d_{K^{\circ}}$ is the
metric associated to the norm $\gamma_{K^{\circ}}$.

By the above assumptions, there is $A>0$ such that $\gamma_{K^{\circ}}(x)\leq A|x|$
for all $x$. We also need some sort of bound on the second derivative
of $\gamma_{K^{\circ}}$, hence we assume that 
\begin{equation}
\frac{\gamma_{K^{\circ}}(x+hz)+\gamma_{K^{\circ}}(x-hz)-2\gamma_{K^{\circ}}(x)}{h^{2}}\leq\frac{B}{\gamma_{K^{\circ}}(x)-h},
\end{equation}
where $B\geq1$ is a constant, $\gamma_{K^{\circ}}(z)=1$ and $h<\gamma_{K^{\circ}}(x)$. 
\begin{lem}
The above inequality holds when $\gamma_{K^{\circ}}$ is the $p\hspace{1bp}$-norm
for $p\geq2$. (In this case, $K$ is the unit disk in the $\frac{p}{p-1}$-norm.)\end{lem}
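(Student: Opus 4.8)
The plan is to use that, for $p\ge2$, the function $\phi:=\gamma_{K^{\circ}}=\|\cdot\|_{p}$ is not only convex but of class $C^{2}$ on $\mathbb{R}^{n}\setminus\{0\}$, to combine this with its positive $1$-homogeneity in order to bound its Hessian by $O(1/\|y\|_{p})$, and then to estimate the second difference by a Taylor remainder along the segment joining $x-hz$ and $x+hz$. (The parenthetical identification of $K$ is just the duality of $\ell^{p}$-balls: the polar of the unit ball of $\|\cdot\|_{p}$ is the unit ball of $\|\cdot\|_{p/(p-1)}$, since $\tfrac{1}{p}+\tfrac{p-1}{p}=1$.)

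First I would record the regularity. For $p\ge2$ the map $t\mapsto|t|^{p}$ is $C^{1}$, with derivative $p|t|^{p-2}t$, and $C^{2}$, with continuous second derivative $p(p-1)|t|^{p-2}$; hence $y\mapsto\sum_{i}|y_{i}|^{p}$ is $C^{2}$ on $\mathbb{R}^{n}$, and composing with the smooth map $s\mapsto s^{1/p}$ on $(0,\infty)$ shows that $\phi$ is $C^{2}$ --- so that $D^{2}\phi$ is continuous --- on $\mathbb{R}^{n}\setminus\{0\}$. Differentiating the identity $\phi(\lambda y)=\lambda\phi(y)$ (valid for $\lambda>0$) twice in $y$ gives $D^{2}\phi(\lambda y)=\lambda^{-1}D^{2}\phi(y)$. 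Since $\{w\mid\|w\|_{p}=1\}$ is compact and $D^{2}\phi$ is continuous on it, there is a finite constant $B_{1}$ with $(D^{2}\phi(w)\,z)\cdot z\le B_{1}$ whenever $\|w\|_{p}=1$ and $\|z\|_{p}\le1$; writing $y=\|y\|_{p}w$ with $\|w\|_{p}=1$ then yields
\[
(D^{2}\phi(y)\,z)\cdot z=\frac{1}{\|y\|_{p}}\,(D^{2}\phi(w)\,z)\cdot z\le\frac{B_{1}}{\|y\|_{p}}\qquad\text{for all }y\ne0,\ \|z\|_{p}\le1.
\]

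Next, fix $x,z$ with $\gamma_{K^{\circ}}(z)=\|z\|_{p}=1$ and $0<h<\|x\|_{p}$, and set $\psi(t):=\phi(x+tz)$. For $|t|\le h$ one has $\|x+tz\|_{p}\ge\|x\|_{p}-|t|\,\|z\|_{p}\ge\|x\|_{p}-h>0$, so the segment $\{x+tz\mid|t|\le h\}$ avoids the origin; thus $\psi$ is $C^{2}$ there, with $\psi''(t)=(D^{2}\phi(x+tz)\,z)\cdot z\le B_{1}/(\|x\|_{p}-h)$ by the bound above. The integral form of Taylor's theorem, applied at $t=0$ to the points $\pm h$, gives
\[
\phi(x+hz)+\phi(x-hz)-2\phi(x)=\psi(h)+\psi(-h)-2\psi(0)=\int_{0}^{h}(h-t)\bigl(\psi''(t)+\psi''(-t)\bigr)\,dt,
\]
and since $h-t\ge0$ on $[0,h]$ the right-hand side is at most
\[
\frac{2B_{1}}{\|x\|_{p}-h}\int_{0}^{h}(h-t)\,dt=\frac{B_{1}h^{2}}{\|x\|_{p}-h}.
\]
Dividing by $h^{2}$ gives the asserted inequality with $B:=\max\{B_{1},1\}$, which is $\ge1$ as required.

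I expect the crux --- really the only substantive point --- to be the $C^{2}$ regularity of $\|\cdot\|_{p}$ away from the origin, which is exactly where $p\ge2$ is used: for $1<p<2$ the second derivatives of $|t|^{p}$ blow up near the coordinate hyperplanes, no Hessian bound of the above form holds, and this route fails. Everything else --- the homogeneity scaling, the compactness of the unit sphere, and the Taylor remainder estimate --- is routine.
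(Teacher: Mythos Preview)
Your argument is correct and shares the same skeleton as the paper's: bound the Hessian of $\gamma_{K^{\circ}}$ by $C/\gamma_{K^{\circ}}$, then convert the centered second difference into second derivatives along the segment. The difference lies in how each step is executed. The paper computes $D_{ij}^{2}\gamma_{p}$ explicitly and applies H\"older's inequality to obtain the sharp bound $D_{zz}^{2}\gamma_{p}(x)\le(p-1)/\gamma_{p}(x)$, then uses the mean value theorem twice to reach $B=2(p-1)$. You instead invoke the $(-1)$-homogeneity of $D^{2}\phi$ together with compactness of the $\ell^{p}$ unit sphere to get an unspecified constant $B_{1}$, and you use the integral Taylor remainder in place of the iterated mean value theorem. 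Your route is softer and immediately generalizes to any gauge that is $C^{2}$ away from the origin; the paper's route gives an explicit, dimension-free constant depending only on $p$. Either is adequate here since the statement only asks for \emph{some} $B\ge1$.
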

\begin{proof}
Let $\gamma_{p}(x)=(\sum|x_{i}|^{p})^{1/p}$ then for $\gamma_{p}(x)\neq0$
we have 
\begin{equation}
D_{i}\gamma_{p}(x)=|x_{i}|^{p-1}\textrm{sgn}(x_{i})(\sum|x_{j}|^{p})^{1/p-1}=\frac{|x_{i}|^{p-1}\textrm{sgn}(x_{i})}{\gamma_{p}(x)^{p-1}},
\end{equation}
where $\textrm{sgn}(x_{i})$ is the sign of $x_{i}$. Thus 
\begin{equation}
D_{ij}^{2}\gamma_{p}(x)=(p-1)|x_{i}|^{p-2}\delta_{ij}\frac{1}{\gamma_{p}(x)^{p-1}}-(p-1)|x_{i}|^{p-1}|x_{j}|^{p-1}\frac{\textrm{sgn}(x_{i})\textrm{sgn}(x_{j})}{\gamma_{p}(x)^{2p-1}}.
\end{equation}
Hence 
\begin{eqnarray*}
 & D_{zz}^{2}\gamma_{p}(x) & =\sum D_{ij}^{2}\gamma_{p}(x)z_{i}z_{j}\\
 &  & =\frac{p-1}{\gamma_{p}(x)^{p-1}}\sum|x_{i}|^{p-2}z_{i}^{2}-\frac{p-1}{\gamma_{p}(x)^{2p-1}}(\sum\textrm{sgn}(x_{i})|x_{i}|^{p-1}z_{i})^{2}.
\end{eqnarray*}
By Holder's inequality we get 
\[
D_{zz}^{2}\gamma_{p}(x)\leq\frac{p-1}{\gamma_{p}(x)^{p-1}}(\sum(|x_{i}|^{p-2})^{\frac{p}{p-2}})^{\frac{p-2}{p}}(\sum(z_{i}^{2})^{\frac{p}{2}})^{\frac{2}{p}}=\frac{p-1}{\gamma_{p}(x)}\gamma_{p}(z)^{2}.
\]
Thus if $\gamma_{p}(z)=1$, we have 
\begin{equation}
D_{zz}^{2}\gamma_{p}(x)\leq\frac{p-1}{\gamma_{p}(x)}.
\end{equation}
When $\gamma_{p}(x)>h$, $\gamma_{p}$ is nonzero on the segment $L:=\{x+\tau z\,\mid\,-h\leq\tau\leq h\}$;
and so it is twice differentiable there. Therefore we can apply the
mean value theorem to the restriction of $\gamma_{p}$ and its first
derivative to the segment $L$. Hence we get 
\begin{eqnarray*}
\frac{\gamma_{p}(x+hz)+\gamma_{p}(x-hz)-2\gamma_{p}(x)}{h^{2}} &  & =\frac{\gamma_{p}(x+hz)-\gamma_{p}(x)+\gamma_{p}(x-hz)-\gamma_{p}(x)}{h^{2}}\\
 &  & =\frac{hD_{z}\gamma_{p}(x+sz)-hD_{z}\gamma_{p}(x-tz)}{h^{2}}\\
 &  & =\frac{(s+t)}{h}D_{zz}^{2}\gamma_{p}(x+rz),
\end{eqnarray*}
where $0<s,t<h$ and $-t<r<s$. Now as $\gamma_{p}$ is convex, its
second derivative is nonnegative definite. Hence 
\begin{eqnarray}
\frac{\gamma_{p}(x+hz)+\gamma_{p}(x-hz)-2\gamma_{p}(x)}{h^{2}} &  & \leq2D_{zz}^{2}\gamma_{p}(x+rz)\nonumber \\
 &  & \leq\frac{2(p-1)}{\gamma_{p}(x+rz)}\nonumber \\
 &  & \leq\frac{2(p-1)}{\gamma_{p}(x)-h}.
\end{eqnarray}
In the last inequality we used the triangle inequality for $\gamma_{p}$.
\end{proof}
The following is our main regularity result. Note that by Theorem
\ref{vector-to-scalar}, we also get the regularity for the vector-valued
case.
\begin{thm}
Suppose $u$ is the minimizer of $J_{\eta}$ over $W_{K}$. Then $u\in W_{\textrm{loc}}^{2,\infty}(U)$,
and 
\begin{equation}
|D^{2}u(x)|\leq C(n)\big[|\eta|+\frac{kA^{2}B}{d_{K^{\circ}}(x,\partial U)}+\frac{A^{2}|c|}{(d_{K^{\circ}}(x,\partial U))^{2}}\big],\label{eq: 2nd der bnd}
\end{equation}
where $C(n)$ is a constant depending only on the dimension $n$.\end{thm}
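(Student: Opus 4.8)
The plan is to build on the reduction already established --- that the minimizer $u$ of $J_{\eta}$ over $W_{K}$ is also the minimizer of $J_{\eta}$ over the double obstacle class
\[
\mathcal{K}:=\{v\in c+H_{0}^{1}(U)\mid\psi^{-}\leq v\leq\psi^{+}\textrm{ a.e.}\},\qquad\psi^{\pm}(x):=c\pm k\,d_{K^{\circ}}(x,\partial U),
\]
--- and to run a difference--quotient comparison argument in the spirit of \citet{MR513957}. The first ingredient is to promote the hypothesis on the second difference of $\gamma_{K^{\circ}}$ to semiconcavity of $\psi^{+}$ and semiconvexity of $\psi^{-}$. Since $d_{K^{\circ}}(x,\partial U)=\inf_{y\notin U}\gamma_{K^{\circ}}(x-y)$, with $\gamma_{K^{\circ}}(x-y)\geq d_{K^{\circ}}(x,\partial U)$ for such $y$, and since the right hand side of the assumed bound is decreasing in $\gamma_{K^{\circ}}(x)$, the infimum inherits that bound; rewriting it along a Euclidean unit vector $e=\gamma_{K^{\circ}}(e)z$ (with $\gamma_{K^{\circ}}(z)=1$ and $\gamma_{K^{\circ}}(e)\leq A$) gives, for small $s>0$,
\[
d_{K^{\circ}}(x+se,\partial U)+d_{K^{\circ}}(x-se,\partial U)-2\,d_{K^{\circ}}(x,\partial U)\leq\frac{A^{2}B\,s^{2}}{d_{K^{\circ}}(x,\partial U)-As}.
\]
Hence on any interior subdomain $\psi^{+}$ is semiconcave and $\psi^{-}$ is semiconvex, both with modulus $\lesssim kA^{2}B/d_{K^{\circ}}(x,\partial U)$ in every direction; this accounts for the constants $k$, $A^{2}$, $B$ in (\ref{eq: 2nd der bnd}).

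The core is a local estimate. Fix $x_{0}\in U$, put $d_{0}:=d_{K^{\circ}}(x_{0},\partial U)$, let $B$ be the $d_{K^{\circ}}$-ball of radius $d_{0}/2$ about $x_{0}$ (so $d_{K^{\circ}}(\cdot,\partial U)\geq d_{0}/2$ on $B$), and fix $\zeta\in C_{c}^{\infty}(B)$ with $\zeta\equiv1$ near $x_{0}$, $|D\zeta|\lesssim d_{0}^{-1}$, $|D^{2}\zeta|\lesssim d_{0}^{-2}$. For a direction $z$ with $\gamma_{K^{\circ}}(z)=1$, small $t>0$, $h:=tz$, and $g:=kBt^{2}/(d_{0}/2-t)$, I would test the minimality of $u$ over $\mathcal{K}$ against the competitor
\[
v^{+}:=u+\lambda\,\zeta^{2}\Bigl(\tfrac{1}{2}\bigl(u(\cdot+h)+u(\cdot-h)\bigr)-u-g\Bigr)^{+},\qquad0<\lambda<1,
\]
and the mirror competitor $v^{-}$ obtained from the substitutions $u\mapsto-u$, $\eta\mapsto-\eta$, $\psi^{+}\leftrightarrow-\psi^{-}$. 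On the set where the positive part is active one has $\tfrac{1}{2}(u(\cdot+h)+u(\cdot-h))-g>u\geq\psi^{-}$, while the semiconcavity of $\psi^{+}$ gives $\tfrac{1}{2}(u(\cdot+h)+u(\cdot-h))-g\leq\psi^{+}$ there; thus $v^{+}$, being a convex combination of $u$ and $\tfrac{1}{2}(u(\cdot+h)+u(\cdot-h))-g$ on that set and equal to $u$ elsewhere, lies in $\mathcal{K}$, and likewise $v^{-}\in\mathcal{K}$ using the semiconvexity of $\psi^{-}$. From $J_{\eta}(u)\leq J_{\eta}(v^{\pm})$, dividing by $\lambda$, letting $\lambda\to0$, and --- following \citet{MR513957} --- adding the resulting inequality to its translates under $x\mapsto x\pm h$, the quadratic structure of $J_{\eta}$ produces a term with a favourable sign, the forcing is controlled by $|\eta|$ times the $L^{1}$-norm of the correction, and the terms in which a derivative lands on $\zeta$ are controlled by $\lesssim d_{0}^{-1}\Vert Du\Vert_{L^{\infty}(B)}+d_{0}^{-2}\Vert u\Vert_{L^{\infty}(B)}$. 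Using $|Du|\leq Ak$ a.e.\ (from $\gamma_{K}(Du)\leq k$) together with the crude bound $|u|\leq|c|+k\,d_{K^{\circ}}(\cdot,\partial U)\lesssim|c|+kd_{0}$ on $B$, these are $\lesssim A^{2}(k/d_{0}+|c|/d_{0}^{2})$, so after discarding the favourable term one is left with
\[
\bigl|\,u(x_{0}+tz)+u(x_{0}-tz)-2u(x_{0})\,\bigr|\leq C(n)\,t^{2}\Bigl[\,|\eta|+\frac{kA^{2}B}{d_{0}}+\frac{A^{2}|c|}{d_{0}^{2}}\,\Bigr],
\]
uniformly in small $t$ and in $\gamma_{K^{\circ}}$-unit directions $z$.

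Letting $t\to0$ then shows $Du\in W_{\mathrm{loc}}^{1,\infty}(U)$, hence $u\in W_{\mathrm{loc}}^{2,\infty}(U)$, and that every pure second directional derivative along a $\gamma_{K^{\circ}}$-unit direction --- and hence, after absorbing a factor $\gamma_{K^{\circ}}(e)^{2}\leq A^{2}$, along every Euclidean unit direction --- is bounded by the same right hand side; polarizing the directional second derivatives controls the full Hessian and yields (\ref{eq: 2nd der bnd}) with a purely dimensional constant. The main obstacle is the middle step: one must verify that the semiconcavity of $\psi^{+}$ and semiconvexity of $\psi^{-}$ really keep $v^{\pm}$ admissible for all small $t$, and --- the true heart of the matter, generalizing \citet{MR513957} --- arrange the comparison so that the favourable quadratic term dominates the perturbation while the localization contributes precisely the asserted powers of $d_{K^{\circ}}(x_{0},\partial U)$. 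A technically convenient way to organize this last point is to carry out the same estimate instead on the smooth solutions $u_{\epsilon}$ of the penalized equations $-\Delta u_{\epsilon}+\beta_{\epsilon}(u_{\epsilon}-\psi_{\epsilon}^{+})-\beta_{\epsilon}(\psi_{\epsilon}^{-}-u_{\epsilon})=\eta$ (with $\psi_{\epsilon}^{\pm}$ smooth obstacles inheriting the semiconcavity/semiconvexity bounds and $\beta_{\epsilon}$ a convex penalty), differentiate twice along $z$, bound $\zeta^{2}\partial_{zz}^{2}u_{\epsilon}$ by the maximum principle using $\beta_{\epsilon}''\geq0$ and the bounds on $\partial_{zz}^{2}\psi_{\epsilon}^{\pm}$, and then let $\epsilon\to0$.
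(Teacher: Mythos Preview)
Your main line of argument has a genuine gap at the step where you claim to be ``left with'' the pointwise second-difference bound. Testing $J_\eta(u)\le J_\eta(v^\pm)$ and sending $\lambda\to 0$ yields the first-variation inequality $\int Du\cdot D\bigl(\zeta^2(\cdots)^+\bigr)\ge\eta\int\zeta^2(\cdots)^+$, an \emph{integral} estimate; after your symmetrization by translates and use of the quadratic structure, what survives is still an integral bound on second differences (morally $W^{2,2}$ or $W^{2,p}$), not the $L^\infty$ control at $x_0$ that you assert. This passage from integrated to pointwise information is exactly the difficulty, and your sketch does not address it. The penalization alternative you mention at the end is a legitimate route because it makes a maximum-principle argument available, but it is not as automatic as you suggest: differentiating the penalized equation twice produces a term $-\beta_\epsilon''(\psi_\epsilon^--u_\epsilon)\,(\partial_z u_\epsilon-\partial_z\psi_\epsilon^-)^2$ with the wrong sign, so one must obtain upper and lower bounds on $\partial_{zz}^2u_\epsilon$ separately, coupling each with the trace identity $\Delta u_\epsilon=-\eta+{}$(penalty terms), rather than in one stroke.

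The paper takes neither of your routes. It mollifies the obstacles to smooth $\phi_\epsilon,\psi_\epsilon$ and works with the minimizer $u_\epsilon$ of the smooth double-obstacle problem on $U_\epsilon=\{\phi_\epsilon<\psi_\epsilon\}$, which lies in $W^{2,p}$ for every $p<\infty$ by standard theory. On the coincidence sets the bound follows from the computed one-sided bounds on $D^2\phi_\epsilon,D^2\psi_\epsilon$ together with $-\Delta u_\epsilon\ge\eta$ (resp.\ $\le\eta$); across the free boundaries one invokes a Caffarelli--Kinderlehrer lemma on the one-sided limits of $D^2_{zz}(u_\epsilon-\phi_\epsilon)$ and $D^2_{zz}(\psi_\epsilon-u_\epsilon)$. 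The decisive step, which replaces your unjustified jump to $L^\infty$, is a rescaling plus Newtonian-potential argument in the non-coincidence set: with $v_\epsilon(y)=u_\epsilon(x_0+ry)$ and a cutoff $\tau$ one has $\Delta D^2_{zz}(\tau v_\epsilon)=\partial_z h$ with $h$ controlled in $L^p$ by the estimates already in hand; the convolution $-(\partial_z V)\ast h$ (bounded since $\partial_z V\in L^{p'}$ for $p>n$) gives a bounded particular solution, and the maximum principle applied to the harmonic remainder yields the pointwise bound on $D^2_{zz}u_\epsilon(x_0)$. One then lets $\epsilon\to 0$ via weak-$\ast$ compactness, and finally removes the smoothness assumption on $\partial U$ by a further approximation of the domain.
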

\begin{proof}
Let us assume that $U$ has smooth boundary, we will remove this restriction
at the end. We know that 
\[
\phi(x)=c-kd_{K^{\circ}}(x,\partial U)\leq u(x)\leq c+kd_{K^{\circ}}(x,\partial U)=\psi(x).
\]

Let $\phi_{\epsilon}=\eta_{\epsilon}\star\phi+\delta_{\epsilon}$
and $\psi_{\epsilon}=\eta_{\epsilon}\star\psi$ where $\eta_{\epsilon}$
is the standard mollifier and $4kA\epsilon<\delta_{\epsilon}<5kA\epsilon$
is chosen such that $\partial\{\phi_{\epsilon}<\psi_{\epsilon}\}$
is $C^{\infty}$ (which is possible by Sard's Theorem). Note that
\begin{eqnarray}
\{x\in U\,\mid\, d_{K^{\circ}}(x,\partial U)>4A\epsilon\} &  & \subset\{x\in\bar{U}\,\mid\,\phi_{\epsilon}(x)\leq\psi_{\epsilon}(x)\}\nonumber \\
 &  & \subset\{x\in U\,\mid\, d_{K^{\circ}}(x,\partial U)>A\epsilon\},
\end{eqnarray}
as $\psi(x)-\phi(x)=2kd_{K^{\circ}}(x,\partial U)$. Also 
\begin{eqnarray*}
 & |\psi_{\epsilon}(x)-\psi(x)| & \leq\int_{|y|\leq\epsilon}\eta_{\epsilon}(y)|\psi(x-y)-\psi(x)|\, dy\\
 &  & \le\int_{|y|\leq\epsilon}\eta_{\epsilon}(y)k\gamma_{K^{\circ}}(y)\, dy\\
 &  & \leq kA\epsilon\int_{|y|\leq\epsilon}\eta_{\epsilon}(y)\, dy\;=kA\epsilon.
\end{eqnarray*}
Similarly $|\eta_{\epsilon}\star\phi-\phi|\leq kA\epsilon$. 

We can easily show that $\gamma_{K}(D\phi_{\epsilon})\leq k$ and
$\gamma_{K}(D\psi_{\epsilon})\leq k$. Because of Jensen's inequality
and convexity of $\gamma_{K}$, we have 
\begin{eqnarray*}
 & \gamma_{K}(D\phi_{\epsilon}(x)) & \leq\int\gamma_{K}(\eta_{\epsilon}(y)D\phi(x-y))\, dy\\
 &  & =\int\eta_{\epsilon}(y)\gamma_{K}(D\phi(x-y))\, dy\\
 &  & \leq k\int\eta_{\epsilon}(y)\, dy\;=k.
\end{eqnarray*}

Let $U_{\epsilon}:=\{x\in U\,\mid\,\phi_{\epsilon}(x)<\psi_{\epsilon}(x)\}$,
and denote by $u_{\epsilon}$ the minimizer of $J_{\eta}$ over $\{v\in H^{1}(D_{\epsilon})\,\mid\,\phi_{\epsilon}\leq v\leq\psi_{\epsilon}\textrm{ a.e. }\}$.
Set 
\begin{equation}
\begin{array}{c}
N_{\epsilon}:=\{x\in U_{\epsilon}\,\mid\,\phi_{\epsilon}(x)<u_{\epsilon}(x)<\psi_{\epsilon}(x)\}\\
\Lambda_{1}:=\{x\in U_{\epsilon}\,\mid\, u_{\epsilon}(x)=\phi_{\epsilon}(x)\}\\
\Lambda_{2}:=\{x\in U_{\epsilon}\,\mid\, u_{\epsilon}(x)=\psi_{\epsilon}(x)\}.
\end{array}
\end{equation}
Since $\phi_{\epsilon},\psi_{\epsilon}$ are smooth, $u_{\epsilon}\in W^{2,p}(U_{\epsilon})$
for any $1<p<\infty$. Therefore $N_{\epsilon}$ is open and $\Lambda_{i}$'s
are closed. Also we define the free boundaries $F_{i}:=\partial\Lambda_{i}\cap U_{\epsilon}$.
Note that $\partial N_{\epsilon}$ consists of $F_{i}$'s and part
of $\partial U_{\epsilon}$.

Our strategy for the proof is to show that $u_{\epsilon}$ satisfies
the bound (\ref{eq: 2nd der bnd}) on $U_{\epsilon}$. Then we can
let $\epsilon\rightarrow0$. Since $\phi_{\epsilon}\rightarrow\phi$
, $\psi_{\epsilon}\rightarrow\psi$ uniformly, we have $u_{\epsilon}\rightarrow u$
uniformly. Also as for small enough $\epsilon$, $u_{\epsilon}$'s
are bounded in $W^{2,\infty}(V)$ for $V\subset\subset U$, a subsequence
of them is weakly star convergent, and the limit is $u$. Therefore
$u\in W_{\textrm{loc}}^{2,\infty}(U)$ and 
\[
|D^{2}u|_{L^{\infty}}\leq\liminf|D^{2}u_{\epsilon}|_{L^{\infty}}
\]
gives the desired bound.

Now suppose $\partial U$ is not smooth. We approximate $U$ by a
shrinking sequence $U_{i}$ of larger domains with smooth boundaries.
Let $u_{i}$ be the minimizer of $J_{\eta}$ on $U_{i}$, then $u_{i}\to u$
uniformly. To see this note that we can consider $u$ as a function
on $U_{i}$, thus $J_{\eta}(u_{i})\le J_{\eta}(u)$. An argument similar
to the above implies that a subsequence of $u_{i}$'s converges weakly
star to a function $u^{\star}$, and $u^{\star}$ satisfies the desired
bound. But $u^{\star}\in W_{K}$. Also the lower semicontinuity of
$J_{\eta}$ implies that $J_{\eta}(u^{\star})\le J_{\eta}(u)$. Since
the other inequality is satisfied too, we have $J_{\eta}(u^{\star})=J_{\eta}(u)$.
The uniqueness of the minimizer implies that $u^{\star}=u$. Hence
$u$ satisfies the bound (\ref{eq: 2nd der bnd}) too.
\end{proof}
Now let us start proving the bound (\ref{eq: 2nd der bnd}) for $u_{\epsilon}$.
\begin{lem}
We have 
\[
\gamma_{K}(Du_{\epsilon})\leq k
\]
on $U_{\epsilon}$. \end{lem}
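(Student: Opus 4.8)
The plan is to establish the stronger Lipschitz‑type bound
\[
u_{\epsilon}(x+h)-u_{\epsilon}(x)\leq k\,\gamma_{K^{\circ}}(h)
\qquad\text{for a.e. }x\text{ with }x,x+h\in U_{\epsilon},
\]
valid for every small $h$, and then to recover the gauge bound from it. Since $u_{\epsilon}\in W^{2,p}(U_{\epsilon})$, $u_{\epsilon}\in C^{1}(U_{\epsilon})$, so the displayed inequality in fact holds for \emph{all} such $x$; fixing $x\in U_{\epsilon}$, taking $h=tz$ and letting $t\to0^{+}$ gives $Du_{\epsilon}(x)\cdot z\leq k\,\gamma_{K^{\circ}}(z)$ for every $z$, which by the definition of $K^{\circ}$ and the identity $K^{\circ\circ}=K$ means $Du_{\epsilon}(x)\in kK$, i.e.\ $\gamma_{K}(Du_{\epsilon}(x))\leq k$. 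The displayed bound will be obtained by the classical translation--comparison technique, adapted to the double obstacle and to the gauge $\gamma_{K}$.

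First I would record the tools. Because $\phi_{\epsilon},\psi_{\epsilon}$ are smooth with $\gamma_{K}(D\phi_{\epsilon})\leq k$ and $\gamma_{K}(D\psi_{\epsilon})\leq k$, integrating the inequality $p\cdot q\leq\gamma_{K}(p)\gamma_{K^{\circ}}(q)$ along segments gives
\[
\phi_{\epsilon}(y)-\phi_{\epsilon}(x)\leq k\,\gamma_{K^{\circ}}(y-x),\qquad
\psi_{\epsilon}(y)-\psi_{\epsilon}(x)\leq k\,\gamma_{K^{\circ}}(y-x)
\]
wherever these functions are defined. Since $\partial U_{\epsilon}$ is smooth, $u_{\epsilon}\in C^{1,\alpha}(\overline{U_{\epsilon}})$, $u_{\epsilon}=\phi_{\epsilon}=\psi_{\epsilon}$ on $\partial U_{\epsilon}$, and $\overline{U_{\epsilon}}\subset\subset U$. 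I then extend $u_{\epsilon}$ to a neighborhood $V$ with $\overline{U_{\epsilon}}\subset\subset V$ in two ways: $\bar{u}^{\phi}:=u_{\epsilon}$ on $U_{\epsilon}$ and $:=\phi_{\epsilon}$ on $V\setminus U_{\epsilon}$, and $\bar{u}^{\psi}:=u_{\epsilon}$ on $U_{\epsilon}$ and $:=\psi_{\epsilon}$ on $V\setminus U_{\epsilon}$. Because $u_{\epsilon}$ agrees with $\phi_{\epsilon}$ (resp.\ $\psi_{\epsilon}$) on the smooth set $\partial U_{\epsilon}$ and all three functions are Lipschitz, both extensions lie in $W^{1,\infty}_{\textrm{loc}}(V)$; shrink $V$ so that $U_{\epsilon}+B_{\rho}\subset V$ for some $\rho>0$.

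Now fix $h$ with $|h|<\rho$ and put $M:=k\,\gamma_{K^{\circ}}(h)=k\,\gamma_{K^{\circ}}(-h)$. Introduce the competitors
\[
\tilde{u}:=\max\{u_{\epsilon},\ \bar{u}^{\phi}(\cdot+h)-M\},\qquad
\hat{u}:=\min\{u_{\epsilon},\ \bar{u}^{\psi}(\cdot-h)+M\}\qquad\text{on }U_{\epsilon}.
\]
Using the two displayed inequalities together with $\phi_{\epsilon}\leq u_{\epsilon}\leq\psi_{\epsilon}$ and $\phi_{\epsilon}<\psi_{\epsilon}$ on $U_{\epsilon}$, one checks routinely that $\phi_{\epsilon}\leq\tilde{u},\hat{u}\leq\psi_{\epsilon}$, so $\tilde{u},\hat{u}$ and the convex combinations $\tfrac{1}{2}(u_{\epsilon}+\tilde{u}),\tfrac{1}{2}(u_{\epsilon}+\hat{u})$ are admissible. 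One also checks that $E:=\{x\in U_{\epsilon}:\bar{u}^{\phi}(x+h)-M>u_{\epsilon}(x)\}$ is contained in $\{x:x+h\in U_{\epsilon}\}$ with $\bar{u}^{\phi}(\cdot+h)=u_{\epsilon}(\cdot+h)$ on it, while $E':=\{x\in U_{\epsilon}:\bar{u}^{\psi}(x-h)+M<u_{\epsilon}(x)\}=E+h$. By minimality of $u_{\epsilon}$ one has $J_{\eta}(\tfrac{1}{2}(u_{\epsilon}+\tilde{u}))-J_{\eta}(u_{\epsilon})\geq0$ and $J_{\eta}(\tfrac{1}{2}(u_{\epsilon}+\hat{u}))-J_{\eta}(u_{\epsilon})\geq0$; in the second, the change of variables $x\mapsto x+h$ turns it into an integral over $E$ as well. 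Adding the two, the terms linear in $\eta$ cancel identically, and the quadratic parts combine via $\tfrac{1}{4}|a+b|^{2}-\tfrac{1}{2}|a|^{2}-\tfrac{1}{2}|b|^{2}=-\tfrac{1}{4}|a-b|^{2}$ (with $a=Du_{\epsilon}(x)$, $b=Du_{\epsilon}(x+h)$), giving
\[
\int_{E}\bigl|Du_{\epsilon}(x+h)-Du_{\epsilon}(x)\bigr|^{2}\,dx\leq0,
\]
so $Du_{\epsilon}(\cdot+h)=Du_{\epsilon}$ a.e.\ on $E$.

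Finally, set $w:=\bigl(\bar{u}^{\phi}(\cdot+h)-M-u_{\epsilon}\bigr)^{+}\in H^{1}(U_{\epsilon})$, so $\{w>0\}=E$. By the previous step $Dw=0$ a.e.\ on $E$, and $Dw=0$ a.e.\ on $\{w=0\}$, hence $Dw\equiv0$ a.e.\ in $U_{\epsilon}$. On the other hand $0\leq w\leq\psi_{\epsilon}-u_{\epsilon}$, and $\psi_{\epsilon}-u_{\epsilon}$ is a nonnegative $H^{1}(U_{\epsilon})$ function vanishing on the smooth boundary $\partial U_{\epsilon}$, hence lies in $H_{0}^{1}(U_{\epsilon})$; therefore $w\in H_{0}^{1}(U_{\epsilon})$, and the Poincar\'e inequality forces $w\equiv0$, i.e.\ $|E|=0$. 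This is exactly the desired bound $u_{\epsilon}(x+h)-u_{\epsilon}(x)\leq k\,\gamma_{K^{\circ}}(h)$, which as explained above yields $\gamma_{K}(Du_{\epsilon})\leq k$ on $U_{\epsilon}$. I expect the main obstacle to be the boundary bookkeeping: checking that the two extensions belong to $W^{1,\infty}_{\textrm{loc}}(V)$, that $\tilde{u},\hat{u}$ are admissible, and that $w\in H_{0}^{1}(U_{\epsilon})$ -- all resting on the elementary translation estimates for $\phi_{\epsilon},\psi_{\epsilon}$ and on $u_{\epsilon}=\phi_{\epsilon}=\psi_{\epsilon}$ on $\partial U_{\epsilon}$. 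Once the competitors are set up correctly, adding the two variational inequalities (in particular the cancellation of the $\eta$‑terms) is routine.
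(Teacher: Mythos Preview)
Your argument is correct, but it takes a genuinely different and much longer route than the paper's. The paper exploits directly the PDE structure of the regularized problem: on $\partial U_{\epsilon}$ one has $u_{\epsilon}=\phi_{\epsilon}=\psi_{\epsilon}$, so tangential derivatives coincide and the normal derivative is squeezed, making $Du_{\epsilon}$ a convex combination of $D\phi_{\epsilon}$ and $D\psi_{\epsilon}$; on the coincidence sets $\Lambda_{i}$ the bound is inherited from the obstacles; and on the noncoincidence set $N_{\epsilon}$, since $-\Delta u_{\epsilon}=\eta$ there, each directional derivative $D_{z}u_{\epsilon}$ is harmonic and the maximum principle propagates the boundary bound $|D_{z}u_{\epsilon}|\le k$ (for $\gamma_{K^{\circ}}(z)=1$) inward, yielding $\gamma_{K}(Du_{\epsilon})=\sup_{\gamma_{K^{\circ}}(z)=1}|D_{z}u_{\epsilon}|\le k$. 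This is a three--line argument once the decomposition $U_{\epsilon}=\Lambda_{1}\cup\Lambda_{2}\cup N_{\epsilon}$ and the $W^{2,p}$ regularity are in place.

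Your approach, by contrast, is the translation--competitor technique (in the spirit of the paper's own Theorem~\ref{Thm equiv 2}): you build admissible test functions from shifted copies of $u_{\epsilon}$, add the two resulting variational inequalities so that the $\eta$--terms cancel, and use strict convexity of $|\,\cdot\,|^{2}$ to force the ``bad'' set to have measure zero. This is more robust---it does not use harmonicity of $D_{z}u_{\epsilon}$ in $N_{\epsilon}$ and would survive replacing $\tfrac{1}{2}|Dv|^{2}$ by a general strictly convex integrand---but it requires considerably more bookkeeping (the two extensions $\bar u^{\phi},\bar u^{\psi}$, the admissibility checks, the identification $E'=E+h$, and the $H_{0}^{1}$/Poincar\'e step to conclude $|E|=0$). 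For the specific functional $J_{\eta}$ at hand, the paper's maximum--principle argument is both shorter and more transparent.
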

\begin{proof}
Since on $\partial U_{\epsilon}$ we have $u_{\epsilon}=\phi_{\epsilon}=\psi_{\epsilon}$
we get $D_{z}u_{\epsilon}=D_{z}\phi_{\epsilon}=D_{z}\psi_{\epsilon}$
for any direction $z$ tangent to $\partial U_{\epsilon}$, and as
$u_{\epsilon}$ is between the obstacles inside $U_{\epsilon}$ we
have $D_{\nu}\phi_{\epsilon}\leq D_{\nu}u_{\epsilon}\leq D_{\nu}\psi_{\epsilon}$
where $\nu$ is the normal direction to $\partial U_{\epsilon}$.
Therefore $Du_{\epsilon}$ is a convex combination of $D\phi_{\epsilon},D\psi_{\epsilon}$
and we get the bound on $\partial U_{\epsilon}$ by convexity of $\gamma_{K}$.
The bound holds on $\Lambda_{i}$'s (and hence on $F_{i}$'s) obviously
as $u_{\epsilon}$ equals one of the obstacles there. 

To obtain the bound for $N_{\epsilon}$ note that for any vector $z$
with $\gamma_{K^{\circ}}(z)=1$ we have 
\[
|D_{z}u_{\epsilon}|=|z\cdot Du_{\epsilon}|\leq\gamma_{K^{\circ}}(z)\gamma_{K}(Du_{\epsilon})\leq k
\]
on $\partial N_{\epsilon}$, and as $D_{z}u_{\epsilon}$ is harmonic
in $N_{\epsilon}$ we get $|D_{z}u_{\epsilon}|\leq k$ in $N_{\epsilon}$
by maximum principle. The result follows from $\gamma_{K}(Du_{\epsilon})=\underset{\gamma_{K^{\circ}}(z)=1}{\sup}|D_{z}u_{\epsilon}|$.
\end{proof}
The local behavior of the free boundaries is the same as the case
of one obstacle problem as obstacles do not touch inside $U_{\epsilon}$.
We need the following lemma from \citet{MR679313}.
\begin{lem}
The free boundary has measure zero. Furthermore for any direction
$z$

${\rm (i)}$ if $y\in N_{\epsilon}$ approaches $x\in F_{1}$, then
$\underset{y\rightarrow x}{\liminf}D_{zz}^{2}(u_{\epsilon}-\phi_{\epsilon})(y)\geq0$.

${\rm (ii)}$ If $y\in N_{\epsilon}$ approaches $x\in F_{2}$, then
$\underset{y\rightarrow x}{\liminf}D_{zz}^{2}(\psi_{\epsilon}-u_{\epsilon})(y)\geq0$.
\end{lem}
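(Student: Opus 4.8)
The plan is to reduce near each piece of the free boundary to a one-obstacle problem, prove the measure--zero assertion by a nondegeneracy/density argument, and get the two $\liminf$ estimates by differentiating a penalized equation twice in the direction $z$ and exploiting the convexity of the penalty (this is essentially the route of \citet{MR679313}). \textbf{Reduction.} Since $\phi_{\epsilon}<\psi_{\epsilon}$ on the open set $U_{\epsilon}$ and both are continuous, for every $x\in F_{1}$ there is a ball $B\subset\subset U_{\epsilon}$ about $x$ with $u_{\epsilon}<\psi_{\epsilon}$ on $B$; on $B$ the minimizer $u_{\epsilon}$ then solves the one-sided obstacle problem for $J_{\eta}$ with lower obstacle $\phi_{\epsilon}$. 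Symmetrically near $F_{2}$ (upper obstacle $\psi_{\epsilon}$), and the substitution $(u_{\epsilon},\phi_{\epsilon},\psi_{\epsilon})\mapsto(-u_{\epsilon},-\psi_{\epsilon},-\phi_{\epsilon})$, which replaces $J_{\eta}$ by $J_{-\eta}$, interchanges the two; so it suffices to argue near $F_{1}$. There $-\Delta u_{\epsilon}=\eta$ on $N_{\epsilon}$, hence $u_{\epsilon}$ and $w:=u_{\epsilon}-\phi_{\epsilon}$ lie in $C^{\infty}(N_{\epsilon})$ and each $D_{zz}^{2}u_{\epsilon}$ is harmonic on $N_{\epsilon}$; also $-\Delta u_{\epsilon}\geq\eta$ on $B$. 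Since $u_{\epsilon}\in W_{\mathrm{loc}}^{2,p}$ and $w\equiv0$ on $\Lambda_{1}$, we get $Dw=0$ and $D^{2}w=0$ a.e. on $\Lambda_{1}$, so $\Delta\phi_{\epsilon}=\Delta u_{\epsilon}\leq-\eta$ a.e. on $\Lambda_{1}$; by continuity $f:=\eta+\Delta\phi_{\epsilon}\leq0$ on $\overline{\Lambda_{1}}$, and $-\Delta w=f$ on $N_{\epsilon}$.

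\textbf{The free boundary is null.} At a point $x\in F_{1}$ with $f(x)<0$ we have $-\Delta w\leq f(x)/2<0$ on $N_{\epsilon}$ near $x$; comparing $w$ with the barrier $y\mapsto\tfrac{|f(x)|}{4n}(|y-y_{0}|^{2}-\rho^{2})$ from a point $y_{0}\in N_{\epsilon}$ close to $x$, and using $w\geq0$ with $w=0$ on $\partial\Lambda_{1}$, gives the nondegeneracy estimate $\sup_{B_{\rho}(x)}w\geq c(n,f(x))\,\rho^{2}$ for small $\rho$. Since also $w(y)\leq C\,\mathrm{dist}(y,\Lambda_{1})^{2}$ by the classical interior $C^{1,1}$ bound (using $Dw=0$ on $\Lambda_{1}$), the point realizing that supremum lies at distance $\gtrsim\rho$ from $\Lambda_{1}$, so $\Lambda_{1}$ has Lebesgue density $<1$ at $x$; as $F_{1}\subset\Lambda_{1}$ and a.e. point of $\Lambda_{1}$ is a density point, $F_{1}\cap\{f<0\}$ is null. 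The degenerate locus $F_{1}\cap\{f=0\}$ is null as well (here one invokes \citet{MR679313}, or a blow-up argument using that $f$ is a fixed smooth function vanishing on no nonempty open set); hence $F_{1}$, and likewise $F_{2}$, is null.

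\textbf{The $\liminf$ estimates.} Fix $x\in F_{1}$, $\delta>0$, a unit vector $z$, and a ball $B=B_{\rho}(x)\subset\subset U_{\epsilon}$ with $u_{\epsilon}<\psi_{\epsilon}$ on $B$. Let $\beta\in C^{\infty}(\mathbb{R})$ be convex with $\beta\equiv0$ on $[0,\infty)$, $\beta\geq0$ and $\beta'\leq-1$ on $(-\infty,0)$, and let $u^{\lambda}\in C^{\infty}(\bar{B})$ solve $-\Delta u^{\lambda}=\eta+\lambda\beta(u^{\lambda}-\phi_{\epsilon})$ with $u^{\lambda}=u_{\epsilon}$ on $\partial B$. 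Standard penalization estimates give $\Vert\lambda\beta(u^{\lambda}-\phi_{\epsilon})\Vert_{\infty}\leq C$ uniformly in $\lambda$, hence $u^{\lambda}\to u_{\epsilon}$ in $C_{\mathrm{loc}}^{1}(B)$ and in $C_{\mathrm{loc}}^{\infty}(N_{\epsilon}\cap B)$, with $D^{2}u^{\lambda}\to0$ on $\mathrm{int}\,\Lambda_{1}\cap B$. Writing $w^{\lambda}=u^{\lambda}-\phi_{\epsilon}$, $m^{\lambda}=D_{zz}^{2}w^{\lambda}$ and differentiating $-\Delta w^{\lambda}=f+\lambda\beta(w^{\lambda})$ twice in the direction $z$,
\begin{equation}
-\Delta m^{\lambda}-\lambda\beta'(w^{\lambda})\,m^{\lambda}=D_{zz}^{2}f+\lambda\beta''(w^{\lambda})\big(D_{z}w^{\lambda}\big)^{2}\;\geq\;D_{zz}^{2}f,
\end{equation}
where $-\lambda\beta'(w^{\lambda})\geq0$: thus $m^{\lambda}$ is a supersolution of an operator with nonnegative zeroth-order term and right-hand side $\geq-\Vert D_{zz}^{2}f\Vert_{\infty}$. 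By the minimum principle a negative interior minimum of $m^{\lambda}$ either lies in $\{w^{\lambda}<0\}$, where $|\lambda\beta'(w^{\lambda})|\geq\lambda$ forces $|m^{\lambda}|\leq\Vert D_{zz}^{2}f\Vert_{\infty}/\lambda$, or is attained on $\partial B$. Sending $\lambda\to\infty$, the region $\{w^{\lambda}<0\}$ collapses onto $\Lambda_{1}$, on whose interior $D_{zz}^{2}w=0$, the contribution of the right-hand side on $B_{\rho}(x)$ is $O(\rho^{2}\Vert D_{zz}^{2}f\Vert_{\infty})$, and iterating on dyadic balls at $x$ together with the continuity of $D_{zz}^{2}f$ yields $D_{zz}^{2}w\geq-\delta$ on $N_{\epsilon}\cap B_{\rho}(x)$ for $\rho$ small, i.e. assertion (i). Assertion (ii) then follows from the reduction, since $\psi_{\epsilon}-u_{\epsilon}=(-u_{\epsilon})-(-\psi_{\epsilon})$.

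\textbf{Main obstacle.} The delicate point is the sharp constant: a plain maximum-principle argument gives only $D_{zz}^{2}(u_{\epsilon}-\phi_{\epsilon})\geq-C(\Vert D^{2}\phi_{\epsilon}\Vert,\mathrm{diam}\,U_{\epsilon})$, and upgrading this to $\liminf\geq0$ forces one to localize at the free boundary point --- where $w$ and $Dw$ are of size $\rho^{2}$ and $\rho$, so that the data term scales like $\rho^{2}\to0$ --- while tracking precisely that the penalized region $\{w^{\lambda}<0\}$, on which $m^{\lambda}$ is automatically small, squeezes onto $\Lambda_{1}$ as $\lambda\to\infty$, the uniform $C^{1,1}$ bound preventing boundary layers; the measure-zero statement is then used to pass from the only-a.e.-defined $D^{2}u_{\epsilon}$ to the genuinely pointwise $\liminf$ taken along $N_{\epsilon}$.
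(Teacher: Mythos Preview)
The paper does not prove this lemma at all: it merely records it as a known result, citing \citet{MR679313} (Caffarelli). So there is no ``paper's own proof'' to compare against; you are attempting to reconstruct the cited argument, and you say so yourself.

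Your reduction to a local one-obstacle problem is correct and is exactly why the paper can invoke the one-obstacle literature. Your nondegeneracy/density argument for the measure-zero claim at points where $f(x)<0$ is the standard Caffarelli mechanism. However, the degenerate part $F_{1}\cap\{f=0\}$ is not handled: you assert that $f=\eta+\Delta\phi_{\epsilon}$ ``vanishes on no nonempty open set'', but nothing in the setup prevents this, and you end up re-citing \citet{MR679313} for precisely the step you set out to prove.

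For the $\liminf$ estimate your penalization scheme is the right idea, but the execution has gaps. First, your $\beta$ cannot be simultaneously $C^{\infty}$, identically zero on $[0,\infty)$, and satisfy $\beta'\le -1$ on $(-\infty,0)$; you need $\beta'\le 0$ with $\beta'\to 0$ smoothly at the origin, and then the inequality $|m^{\lambda}|\le \|D_{zz}^{2}f\|_{\infty}/\lambda$ at a minimum in $\{w^{\lambda}<0\}$ no longer follows as written. Second, the dichotomy ``either the negative interior minimum of $m^{\lambda}$ lies in $\{w^{\lambda}<0\}$ or on $\partial B$'' is false: nothing rules out a negative interior minimum in $\{w^{\lambda}>0\}$, where $\beta'=0$ and the zeroth-order term gives no control. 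The actual Caffarelli argument compares with a barrier or uses the nonnegativity of $w$ together with a more careful supersolution construction; your ``iterating on dyadic balls'' line gestures at this but does not carry it out. Third, you never explain how to control $m^{\lambda}$ on $\partial B$, which is needed to close the minimum-principle step.

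In short: the paper treats this lemma as a black box from the literature, while you try to open the box; the outline is right, but several links in the chain (smooth penalty with the claimed derivative bound, the dichotomy for the location of the minimum, the boundary control, and the degenerate free-boundary case) are not actually forged.
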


\begin{lem}
For any direction $z$ with $|z|=1$, we have 
\begin{equation}
\begin{array}{c}
D_{zz}^{2}\phi_{\epsilon}(x)\geq-\frac{kA^{2}B}{d_{K^{\circ}}(x,\partial U)-A\epsilon}\\
\\
D_{zz}^{2}\psi_{\epsilon}(x)\leq\frac{kA^{2}B}{d_{K^{\circ}}(x,\partial U)-A\epsilon}
\end{array}
\end{equation}
for all $x\in U$ with $d_{K^{\circ}}(x,\partial U)>A\epsilon$.\end{lem}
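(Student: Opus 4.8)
The plan is to reduce both inequalities to a single pointwise second-difference estimate for the function $d(x):=d_{K^{\circ}}(x,\partial U)$, and then carry that estimate through the mollification. Note first that $\psi=c+kd$ and $\phi=c-kd$, so $\psi_{\epsilon}+\phi_{\epsilon}=\eta_{\epsilon}\star(\psi+\phi)+\delta_{\epsilon}=2c+\delta_{\epsilon}$ is constant; in particular $D_{zz}^{2}\phi_{\epsilon}=-D_{zz}^{2}\psi_{\epsilon}$, so the two asserted inequalities are equivalent and it suffices to bound $D_{zz}^{2}\psi_{\epsilon}$ from above.

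The first step is to prove the following claim: for every $p\in U$ with $d(p)>0$, every unit vector $z$, and every $0<h<d(p)/A$,
\[
d(p+hz)+d(p-hz)-2d(p)\le\frac{h^{2}A^{2}B}{d(p)-hA}.
\]
To see this, pick $q\in\partial U$ with $\gamma_{K^{\circ}}(p-q)=d(p)$ (such $q$ exists since $\partial U$ is compact) and set $v:=p-q$, so $\gamma_{K^{\circ}}(v)=d(p)$. Since $d(p\pm hz)\le\gamma_{K^{\circ}}(p\pm hz-q)=\gamma_{K^{\circ}}(v\pm hz)$, the left-hand side is at most $\gamma_{K^{\circ}}(v+hz)+\gamma_{K^{\circ}}(v-hz)-2\gamma_{K^{\circ}}(v)$. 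Now rescale the direction: put $\hat z:=z/\gamma_{K^{\circ}}(z)$ (legitimate because $K$ is balanced, so $\gamma_{K^{\circ}}$ is a genuine norm) and $h':=h\gamma_{K^{\circ}}(z)$. Since $h'\le hA<d(p)=\gamma_{K^{\circ}}(v)$, the standing hypothesis on $\gamma_{K^{\circ}}$ applies with point $v$, direction $\hat z$ and step $h'$; because $h'\hat z=hz$ it gives exactly $\gamma_{K^{\circ}}(v+hz)+\gamma_{K^{\circ}}(v-hz)-2\gamma_{K^{\circ}}(v)\le(h')^{2}B/(\gamma_{K^{\circ}}(v)-h')=h^{2}\gamma_{K^{\circ}}(z)^{2}B/(d(p)-h\gamma_{K^{\circ}}(z))$. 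Finally, $\gamma_{K^{\circ}}(z)\le A|z|=A$ lets one replace $\gamma_{K^{\circ}}(z)$ by $A$ in numerator and denominator, only enlarging the bound.

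The second step transfers this to $\psi_{\epsilon}$. Fix $x$ with $d(x)>A\epsilon$ and a unit vector $z$. Writing the second difference under the integral,
\[
\psi_{\epsilon}(x+hz)+\psi_{\epsilon}(x-hz)-2\psi_{\epsilon}(x)=k\int\eta_{\epsilon}(y)\big[d(x-y+hz)+d(x-y-hz)-2d(x-y)\big]\,dy,
\]
and for $|y|\le\epsilon$ the triangle inequality for $\gamma_{K^{\circ}}$ yields $d(x-y)\ge d(x)-\gamma_{K^{\circ}}(y)\ge d(x)-A\epsilon>0$. Hence, for all $h$ small enough that $hA<d(x)-A\epsilon$, the claim applies at each point $x-y$ and, after dividing by $h^{2}$,
\[
\frac{\psi_{\epsilon}(x+hz)+\psi_{\epsilon}(x-hz)-2\psi_{\epsilon}(x)}{h^{2}}\le k\int\eta_{\epsilon}(y)\,\frac{A^{2}B}{d(x-y)-hA}\,dy.
\]
Letting $h\to0^{+}$, the left side tends to $D_{zz}^{2}\psi_{\epsilon}(x)$ because $\psi_{\epsilon}$ is smooth, while the right side decreases to $k\int\eta_{\epsilon}(y)A^{2}B/d(x-y)\,dy\le kA^{2}B/(d(x)-A\epsilon)$, giving $D_{zz}^{2}\psi_{\epsilon}(x)\le kA^{2}B/(d(x)-A\epsilon)$; by the first paragraph this is the same as $D_{zz}^{2}\phi_{\epsilon}(x)\ge-kA^{2}B/(d(x)-A\epsilon)$. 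I expect the only delicate point to be setting up the reduction to $\gamma_{K^{\circ}}$ correctly — choosing the closest boundary point and performing the directional rescaling $z\mapsto z/\gamma_{K^{\circ}}(z)$ — after which the standing hypothesis on $\gamma_{K^{\circ}}$ does all the work and the passage to the limit in $h$ is routine, since $\psi_{\epsilon}$ is smooth and the bound on the right is monotone in $h$.
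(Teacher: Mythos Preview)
Your argument is correct and follows the paper's strategy: compare the second difference of $d_{K^{\circ}}(\cdot,\partial U)$ with that of $\gamma_{K^{\circ}}(\cdot-q)$ at a nearest boundary point $q$, invoke the standing second-difference hypothesis on $\gamma_{K^{\circ}}$, mollify, and let $h\to 0$. The only organizational difference is where the passage from $|z|=1$ to a $\gamma_{K^{\circ}}$-unit direction occurs. You normalize \emph{before} applying the hypothesis, writing $hz=h'\hat z$ with $\gamma_{K^{\circ}}(\hat z)=1$, and then bound $h^{2}\gamma_{K^{\circ}}(z)^{2}B/(d(p)-h\gamma_{K^{\circ}}(z))$ by $h^{2}A^{2}B/(d(p)-hA)$ using $\gamma_{K^{\circ}}(z)\le A$; the paper instead proves the bound first for $\gamma_{K^{\circ}}(w)=1$, carries it through the mollification, and only at the end writes $D_{zz}^{2}\psi_{\epsilon}=(\gamma_{K^{\circ}}(z))^{2}D_{ww}^{2}\psi_{\epsilon}\le A^{2}D_{ww}^{2}\psi_{\epsilon}$, justifying the last inequality by ``$D^{2}\psi_{\epsilon}$ is nonnegative since $\psi$ is convex.'' Your ordering is in fact cleaner here: $\psi=c+kd_{K^{\circ}}(\cdot,\partial U)$ is an infimum of convex functions and need not be convex, so the paper's justification is not quite right (though its conclusion survives by splitting on the sign of $D_{ww}^{2}\psi_{\epsilon}$); your version sidesteps the issue entirely.
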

\begin{proof}
First we assume $\gamma_{K^{\circ}}(z)=1$. Let $x_{0}\in U$ then
\[
\psi(x_{0})=c+kd_{K^{\circ}}(x_{0},\partial U)=c+k\gamma_{K^{\circ}}(x_{0}-y_{0})
\]
for some $y_{0}\in\partial U$. Set $\gamma(x)=c+k\gamma_{K^{\circ}}(x-y_{0})$.
Then $\psi(x)\leq\gamma(x)$ and $\psi(x_{0})=\gamma(x_{0})$. Now
for $h<\gamma_{K^{\circ}}(x_{0}-y_{0})$ we have 
\begin{equation}
\Delta_{h,z}^{2}\psi(x_{0})=\frac{\psi(x_{0}+hz)+\psi(x_{0}-hz)-2\psi(x_{0})}{h^{2}}\leq\Delta_{h,z}^{2}\gamma(x_{0}).
\end{equation}
By our assumption 
\[
\Delta_{h,z}^{2}\gamma(x_{0})\leq\frac{kB}{\gamma_{K^{\circ}}(x_{0}-y_{0})-h}=\frac{kB}{d_{K^{\circ}}(x_{0},\partial U)-h}.
\]
Hence $\Delta_{h,z}^{2}\psi(x)\leq\frac{kB}{d_{K^{\circ}}(x,\partial U)-h}$
for $d_{K^{\circ}}(x,\partial U)>h$. 

Now for $d_{K^{\circ}}(x,\partial U)>h+A\epsilon$, we have 
\begin{eqnarray*}
 & \Delta_{h,z}^{2}\psi_{\epsilon}(x) & =\int_{|y|<\epsilon}\eta_{\epsilon}(y)\Delta_{h,z}^{2}\psi(x-y)\, dy\\
 &  & \leq\int_{|y|<\epsilon}\eta_{\epsilon}(y)\frac{kB}{d_{K^{\circ}}(x-y,\partial U)-h}\, dy\\
 &  & \leq\int_{|y|<\epsilon}\eta_{\epsilon}(y)\frac{kB}{d_{K^{\circ}}(x,\partial U)-A\epsilon-h}\, dy\\
 &  & =\frac{kB}{d_{K^{\circ}}(x,\partial U)-A\epsilon-h}.
\end{eqnarray*}
Here we used the fact that 
\begin{eqnarray*}
 & d_{K^{\circ}}(x-y,\partial U) & \geq d_{K^{\circ}}(x,\partial U)-\gamma_{K^{\circ}}(y)\\
 &  & \ge d_{K^{\circ}}(x,\partial U)-A|y|\\
 &  & >d_{K^{\circ}}(x,\partial U)-A\epsilon\;>h.
\end{eqnarray*}
Taking $h\rightarrow0$, we get for $d_{K^{\circ}}(x,\partial U)>A\epsilon$
\[
D_{zz}^{2}\psi_{\epsilon}(x)\leq\frac{kB}{d_{K^{\circ}}(x,\partial U)-A\epsilon}.
\]
Now if we take $|z|=1$ and apply the above result to $w=\frac{z}{\gamma_{K^{\circ}}(z)}$,
we get 
\[
D_{zz}^{2}\psi_{\epsilon}(x)=(\gamma_{K^{\circ}}(z))^{2}D_{ww}^{2}\psi_{\epsilon}(x)\leq A^{2}D_{ww}^{2}\psi_{\epsilon}(x)\leq\frac{kA^{2}B}{d_{K^{\circ}}(x,\partial U)-A\epsilon},
\]
as $\gamma_{K^{\circ}}(z)\leq A$ and $D^{2}\psi_{\epsilon}$ is nonnegative
since $\psi$ is convex. The inequality for $\phi_{\epsilon}$ follows
from $D^{2}\phi_{\epsilon}=-D^{2}\psi_{\epsilon}$.\end{proof}
\begin{lem}
For any direction $z$ with $|z|=1$ 
\begin{equation}
|D_{zz}^{2}u_{\epsilon}(x)|\leq C(n)\big[|\eta|+\frac{kA^{2}B}{d_{K^{\circ}}(x,\partial U)-A\epsilon}+\frac{A^{2}|c|}{(d_{K^{\circ}}(x,\partial U)-A\epsilon)^{2}}\big]
\end{equation}
for a.e. $x\in U_{\epsilon}$, where $C(n)$ is a constant depending
only on the dimension $n$. \end{lem}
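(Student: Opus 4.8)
The plan is to bound the extreme eigenvalues $\mu_1\le\cdots\le\mu_n$ of $D^2u_\epsilon$ from both sides and then use that $|D^2_{zz}u_\epsilon|\le\max(|\mu_1|,|\mu_n|)$ for every unit vector $z$. Two facts drive everything. First, the Euler--Lagrange equation of $J_\eta$ gives $\Delta u_\epsilon=-\eta$ in $N_\epsilon$, so $u_\epsilon$ is smooth there and, for each fixed unit $z$, $D^2_{zz}u_\epsilon$ is harmonic in $N_\epsilon$; hence $\mu_n=\sup_{|z|=1}D^2_{zz}u_\epsilon$ is subharmonic and $\mu_1=\inf_{|z|=1}D^2_{zz}u_\epsilon$ is superharmonic in $N_\epsilon$ (a supremum, resp.\ infimum, of harmonic functions), and both are continuous there. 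Second, in $N_\epsilon$ the trace identity $\mu_1+\cdots+\mu_n=\mathrm{tr}\,D^2u_\epsilon=\Delta u_\epsilon=-\eta$, together with $\mu_j\ge\mu_1$ for all $j$, gives
\[
\mu_n\le-\eta-(n-1)\mu_1\le|\eta|+(n-1)\mu_1^{-},\qquad \mu_1\ge-\eta-(n-1)\mu_n\ge-|\eta|-(n-1)\mu_n^{+}.
\]
Thus a lower bound for $\mu_1$ forces an upper bound for $\mu_n$ and conversely; this is what keeps the two one-sided estimates from being circular.

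The boundary of each component of $N_\epsilon$ is contained in $\overline{F_1}\cup\overline{F_2}\cup\partial U_\epsilon$. On the free boundaries the free-boundary lemma above supplies the needed one-sided limit: as $y\in N_\epsilon\to x\in F_1$, part (i) gives $\liminf D^2_{zz}(u_\epsilon-\phi_\epsilon)(y)\ge0$ for every $z$, so with the curvature bound $D^2_{zz}\phi_\epsilon\ge-kA^2B/(d_{K^{\circ}}(x,\partial U)-A\epsilon)$ and continuity of $D^2\phi_\epsilon$ we get $\liminf\mu_1(D^2u_\epsilon(y))\ge-kA^2B/(d_{K^{\circ}}(x,\partial U)-A\epsilon)$, whence by the display above $\limsup\mu_n(D^2u_\epsilon(y))\le|\eta|+(n-1)kA^2B/(d_{K^{\circ}}(x,\partial U)-A\epsilon)$; symmetrically, part (ii) and $D^2_{zz}\psi_\epsilon\le kA^2B/(d_{K^{\circ}}(x,\partial U)-A\epsilon)$ give, at $x\in F_2$, $\limsup\mu_n\le kA^2B/(d_{K^{\circ}}(x,\partial U)-A\epsilon)$ and then $\liminf\mu_1\ge-|\eta|-(n-1)kA^2B/(d_{K^{\circ}}(x,\partial U)-A\epsilon)$. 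On $\Lambda_1,\Lambda_2$ themselves $D^2u_\epsilon$ equals $D^2\phi_\epsilon$, resp.\ $D^2\psi_\epsilon$, a.e., and from the signs and curvature bounds for these ($-kA^2B/(d_{K^{\circ}}(x,\partial U)-A\epsilon)\le D^2_{zz}\phi_\epsilon\le0\le D^2_{zz}\psi_\epsilon\le kA^2B/(d_{K^{\circ}}(x,\partial U)-A\epsilon)$) both eigenvalues are controlled there; and the free boundary is null.

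To turn these boundary values into the pointwise, distance-weighted estimate and to treat the part of $\partial N_\epsilon$ on $\partial U_\epsilon$, I would rescale. Fix $x_0\in U_\epsilon$, set $\rho:=\tfrac{1}{2A}(d_{K^{\circ}}(x_0,\partial U)-A\epsilon)$, and observe that the Euclidean ball $B(x_0,\rho)$ lies in $\{x:d_{K^{\circ}}(x,\partial U)>A\epsilon\}$; on $B(x_0,\rho)$ the obstacle Hessians are $\lesssim kA^2B/(d_{K^{\circ}}(x_0,\partial U)-A\epsilon)=kAB/(2\rho)$, while $u_\epsilon$ is $kA$-Lipschitz and lies between the obstacles, so $\|u_\epsilon\|_{L^\infty(B(x_0,\rho))}\lesssim|c|+k\,d_{K^{\circ}}(x_0,\partial U)$. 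Rescaling $u_\epsilon$ (with the obstacles and, if $B(x_0,\rho)$ meets $\partial U_\epsilon$, with the Dirichlet datum $u_\epsilon=\phi_\epsilon=\psi_\epsilon$ on the $C^\infty$ set $\partial U_\epsilon$, whose curvature there is $\lesssim AB/\rho$ and so bounded after rescaling) to unit scale, and invoking the scale-invariant interior --- resp.\ boundary --- $C^{1,1}$-estimate for the double obstacle problem (whose engine is exactly the sub/superharmonicity and free-boundary analysis of the previous two paragraphs), one gets $\rho^2|D^2u_\epsilon(x_0)|\lesssim|c|+k\,d_{K^{\circ}}(x_0,\partial U)+\rho^2|\eta|+\rho kAB$; dividing by $\rho^2$ and using $\rho^{-1}\sim A/(d_{K^{\circ}}(x_0,\partial U)-A\epsilon)$ yields $|D^2u_\epsilon(x_0)|\le C(n)\bigl[|\eta|+kA^2B/(d_{K^{\circ}}(x_0,\partial U)-A\epsilon)+A^2|c|/(d_{K^{\circ}}(x_0,\partial U)-A\epsilon)^2\bigr]$, the $k\,d_{K^{\circ}}(x_0,\partial U)/\rho^2$ term being absorbed into the middle one. (Alternatively one could compare $\mu_n,\mu_1$ with a barrier of this shape on $N_\epsilon$ using the previous two paragraphs, but verifying it is a supersolution is delicate since $d_{K^{\circ}}(\cdot,\partial U)$ is only Lipschitz.) As the free boundary is null, this is the claimed estimate a.e.\ on $U_\epsilon$.

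The genuine obstacle is this last step: controlling $D^2u_\epsilon$ up to $\partial U_\epsilon$ with the right power of $d_{K^{\circ}}(\cdot,\partial U)-A\epsilon$. Near $\partial U_\epsilon$ the coincidence set has no a priori good geometry and the obstacle curvature blows up, so one must check that after rescaling on $B(x_0,\rho)$ every piece of data of the obstacle problem --- forcing term, the two obstacle Hessians, the $C^0$-sizes of $u_\epsilon$ and the obstacles, and the curvature of $\partial U_\epsilon$ where it enters --- is bounded by an absolute constant times $|\eta|$, $kA^2B$, $kA^2B$, $|c|$, and $1$ respectively, so that a single scale-invariant $C^{1,1}$-bound applies; this is where the $A^2|c|$-term is produced. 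By contrast, the free-boundary input is routine once that lemma is granted, and the trace identity is simply the device that couples the two one-sided estimates.
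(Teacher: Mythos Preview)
Your setup matches the paper closely: the decomposition into $\Lambda_1,\Lambda_2,N_\epsilon$, the use of the free-boundary lemma to control $\liminf/\limsup$ of $D^2_{zz}u_\epsilon$ across $F_1,F_2$, and the trace identity $\sum D^2_{z_iz_i}u_\epsilon=-\eta$ to convert a one-sided bound into a two-sided one are exactly what the paper does. Your eigenvalue reformulation (subharmonicity of $\mu_n$, superharmonicity of $\mu_1$) is a clean way to package this, though the paper simply works with $D^2_{zz}u_\epsilon$ for an arbitrary fixed unit $z$.

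The genuine gap is the step you yourself flag as the ``genuine obstacle''. You write: ``invoking the scale-invariant interior --- resp.\ boundary --- $C^{1,1}$-estimate for the double obstacle problem (whose engine is exactly the sub/superharmonicity and free-boundary analysis of the previous two paragraphs)''. But that $C^{1,1}$-estimate \emph{is} the content of this lemma; appealing to it is circular unless you cite an external theorem and check its hypotheses and constants. The paper does not do this. Instead, after rescaling $v_\epsilon(y)=u_\epsilon(x_0+ry)$, it proceeds concretely: first, the already-established control of $\Delta u_\epsilon$ on $\Lambda_1\cup\Lambda_2\cup N_\epsilon$ gives $|\Delta(\sigma v_\epsilon)|\le C(n)[(|\eta|+kAB/r)r^2+|c|]$; Calder\'on--Zygmund then yields $\|D^2 v_\epsilon\|_{L^p(B_{1/4})}\le C(n,p)[\dots]$. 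To upgrade to $L^\infty$, the paper writes $\Delta D^2_{zz}(\tau v_\epsilon)=D_z h$ in the (rescaled) non-coincidence set $N$, with $h$ controlled in $L^p$; represents $g=-\int_N \partial_z V(y-w)\,h(w)\,dw$ via the Newtonian potential so that $D^2_{zz}(\tau v_\epsilon)-g$ is harmonic in $N\cap B_{1/4}$; and applies the maximum principle using the free-boundary bounds on $\partial N$ and $\tau=0$ on $\partial B_{1/4}$. This Caffarelli-style argument is precisely what replaces your black box, and it is where the constant $C(n)$ (rather than some unspecified obstacle-problem constant) actually comes from.

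A second, smaller point: you spend effort on the case where the rescaled ball meets $\partial U_\epsilon$, invoking curvature bounds for $\partial U_\epsilon$ and a boundary $C^{1,1}$-estimate. This is unnecessary. The paper rescales around $x_0$ with $d_{K^\circ}(x_0,\partial U_\epsilon)=Ar$ and works in $B_{r/2}(x_0)$, which stays well inside $U_\epsilon$; the resulting bound is $C(n)[|\eta|+kAB/r+|c|/r^2]$, which is \emph{allowed} to blow up as $x_0\to\partial U_\epsilon$ and matches the stated estimate after using $d_{K^\circ}(x,\partial U_\epsilon)\ge d_{K^\circ}(x,\partial U)-A\epsilon$. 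No boundary analysis at $\partial U_\epsilon$ is required.
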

\begin{proof}
Since $u_{\epsilon}\in W^{2,p}(U_{\epsilon})$ we have $D_{zz}^{2}u_{\epsilon}=D_{zz}^{2}\phi_{\epsilon}$
a.e. on $\Lambda_{1}$. Also in a $U_{\epsilon}-$neighborhood of
$\Lambda_{1}$ we have $-\Delta u_{\epsilon}\geq\eta$ a.e., since
$u_{\epsilon}$ solves the variational inequality there. Thus for
a.e. $x\in\Lambda_{1}$ 
\begin{eqnarray}
-\frac{kA^{2}B}{d_{K^{\circ}}(x,\partial U)-A\epsilon} &  & \leq D_{zz}^{2}\phi_{\epsilon}(x)\nonumber \\
 &  & =D_{zz}^{2}u_{\epsilon}(x)\nonumber \\
 &  & =\Delta u_{\epsilon}(x)-\sum D_{z_{i}z_{i}}^{2}u_{\epsilon}(x)\nonumber \\
 &  & \leq-\eta-\sum D_{z_{i}z_{i}}^{2}\phi_{\epsilon}(x)\nonumber \\
 &  & \leq|\eta|+\frac{(n-1)kA^{2}B}{d_{K^{\circ}}(x,\partial U)-A\epsilon},
\end{eqnarray}
where $\{z,z_{i}\}$ form an orthonormal system (Note that for $x\in U_{\epsilon}$
we have $d_{K^{\circ}}(x,\partial U)>A\epsilon$). Similarly, using
$\psi_{\epsilon}$ we obtain that for a.e. $x\in\Lambda_{2}$ 
\begin{equation}
-|\eta|-\frac{(n-1)kA^{2}B}{d_{K^{\circ}}(x,\partial U)-A\epsilon}\leq D_{zz}^{2}u_{\epsilon}(x)\leq\frac{kA^{2}B}{d_{K^{\circ}}(x,\partial U)-A\epsilon}.
\end{equation}

It only remains to obtain the bound on $N_{\epsilon}$. We do this
using maximum principle, since $D_{zz}^{2}u_{\epsilon}$ is harmonic
in $N_{\epsilon}$. Therefore we need to estimate $D_{zz}^{2}u_{\epsilon}$
near $F_{i}$ and $\partial U_{\epsilon}$. First, for $x\in F_{1}$
and $y\in N_{\epsilon}$, we have by continuity of $D^{2}\phi_{\epsilon}$
\begin{equation}
\underset{y\rightarrow x}{\liminf}D_{zz}^{2}u_{\epsilon}(y)\geq\underset{y\rightarrow x}{\liminf}D_{zz}^{2}\phi_{\epsilon}(y)=D_{zz}^{2}\phi_{\epsilon}(x)\geq-\frac{kA^{2}B}{d_{K^{\circ}}(x,\partial U)-A\epsilon}.
\end{equation}
This is true for the $z_{i}$ directions too. Also 
\begin{equation}
\underset{y\rightarrow x}{\limsup}(D_{zz}^{2}u_{\epsilon}(y)+\sum D_{z_{i}z_{i}}^{2}u_{\epsilon}(y))=\underset{y\rightarrow x}{\limsup}\Delta u_{\epsilon}(y)=-\eta.
\end{equation}
Thus 
\begin{eqnarray}
\underset{y\rightarrow x}{\limsup}D_{zz}^{2}u_{\epsilon}(y) &  & \le\underset{y\rightarrow x}{\limsup}(D_{zz}^{2}u_{\epsilon}(y)+\sum D_{z_{i}z_{i}}^{2}u_{\epsilon}(y))-\sum\underset{y\rightarrow x}{\liminf}D_{z_{i}z_{i}}^{2}u_{\epsilon}(y)\nonumber \\
 &  & =-\eta-\sum\underset{y\rightarrow x}{\liminf}D_{z_{i}z_{i}}^{2}u_{\epsilon}(y)\\
 &  & \leq|\eta|+\frac{(n-1)kA^{2}B}{d_{K^{\circ}}(x,\partial U)-A\epsilon}.
\end{eqnarray}
Similarly on $F_{2}$ we have 
\begin{eqnarray}
-|\eta|-\frac{(n-1)kA^{2}B}{d_{K^{\circ}}(x,\partial U)-A\epsilon} &  & \leq\underset{y\rightarrow x}{\liminf}D_{zz}^{2}u_{\epsilon}(y)\nonumber \\
\le\underset{y\rightarrow x}{\limsup}D_{zz}^{2}u_{\epsilon}(y) &  & \leq\frac{kA^{2}B}{d_{K^{\circ}}(x,\partial U)-A\epsilon}.
\end{eqnarray}

Next we show that 
\begin{equation}
|D_{zz}^{2}u_{\epsilon}(x)|\leq C(n)[|\eta|+\frac{kAB}{r}+\frac{|c|}{r^{2}}]\qquad x\in N_{\epsilon}\; d_{K^{\circ}}(x,\partial U_{\epsilon})=Ar,
\end{equation}
for fixed and small $r$ and $\epsilon<r/16$. Note that 
\[
d_{K^{\circ}}(B_{r/2}(x),\partial U_{\epsilon})>Ar-Ar/2=Ar/2.
\]

Fix $x_{0}\in N_{\epsilon}$ with $d_{K^{\circ}}(x_{0},\partial U_{\epsilon})=Ar$
and consider the function $v_{\epsilon}(y)=u_{\epsilon}(x_{0}+ry)$
in $B_{1}(0)$. Then by known bounds on $u_{\epsilon}$ we have in
$B_{1/2}(0)$ 
\begin{equation}
\begin{array}{c}
|v_{\epsilon}|\leq|c|+6Ak\epsilon+3Akr/2<|c|+2Akr\\
\\
\gamma_{K}(Dv_{\epsilon})\leq rk.
\end{array}
\end{equation}
Also for a.e. $y\in B_{1/2}(0)$ we have 
\begin{eqnarray}
 & |\Delta v_{\epsilon}(y)| & \leq nr^{2}(|\eta|+\frac{(n-1)kA^{2}B}{d_{K^{\circ}}(x_{0}+ry,\partial U)-A\epsilon})\nonumber \\
 &  & \le n(|\eta|+\frac{(n-1)kA^{2}B}{Ar/2-A\epsilon})r^{2}\nonumber \\
 &  & <n(|\eta|+\frac{16(n-1)kAB}{7r})r^{2}.
\end{eqnarray}
Since $\Delta u_{\epsilon}=-\eta$ in $N_{\epsilon}$ and it is bounded
on $\Lambda_{i}$'s (and free boundaries have measure zero). Choose
$\sigma\in C_{0}^{\infty}(B_{1/2}(0))$ such that $\sigma=1$ in $B_{1/4}(0)$.
Then in $B_{1/2}(0)$ 
\begin{eqnarray}
 & |\Delta(\sigma v_{\epsilon})| & =|(\Delta\sigma)v_{\epsilon}+2D\sigma\cdot Dv_{\epsilon}+\sigma\Delta v_{\epsilon}|\nonumber \\
 &  & \le C(n)[(|\eta|+\frac{kAB}{r})r^{2}+|c|].
\end{eqnarray}
By elliptic theory it follows 
\begin{equation}
|\sigma v_{\epsilon}|_{W^{2,p}(B_{1/2}(0))}\leq C(n,p)[(|\eta|+\frac{kAB}{r})r^{2}+|c|],
\end{equation}
for any $1<p<\infty$ (note that boundary term is zero). In particular
\begin{equation}
|D_{ij}^{2}v_{\epsilon}|_{L^{p}(B_{1/4}(0))}\leq C(n,p)[(|\eta|+\frac{kAB}{r})r^{2}+|c|].
\end{equation}
We want to extend this to $p=\infty$.

Let $\tau\in C_{0}^{\infty}(B_{1/4}(0))$ with $\tau=1$ in $B_{1/8}(0)$.
Consider the open set $N=\{y\,\mid\, x_{0}+ry\in N_{\epsilon}\}$.
In $N$ we have $\Delta v_{\epsilon}=-\eta r^{2}$. Thus (note that
$v_{\epsilon}$ is smooth in $N$) 
\begin{equation}
\Delta D_{zz}^{2}(\tau v_{\epsilon})=D_{z}h,
\end{equation}
where 
\begin{eqnarray}
h & :=D_{z}\Delta(\tau v_{\epsilon}) & =D_{z}((\Delta\sigma)v_{\epsilon}+2D\sigma\cdot Dv_{\epsilon}+\sigma\Delta v_{\epsilon})\nonumber \\
 &  & =D_{z}((\Delta\sigma)v_{\epsilon}+2D\sigma\cdot Dv_{\epsilon}-\sigma\eta r^{2}).
\end{eqnarray}
Using the above estimates we find that 
\begin{equation}
|h|_{L^{p}(N)}\leq C(n,p)[(|\eta|+\frac{kAB}{r})r^{2}+|c|].
\end{equation}
Now take 
\begin{equation}
V(y)=\begin{cases}
\alpha_{n}|y|^{2-n} & n\geq3\\
\\
\alpha_{2}\log|y| & n=2
\end{cases}
\end{equation}
to be the fundamental solution of $-\Delta$. Then 
\[
g(y)=-\int_{N}\frac{\partial V(y-w)}{\partial z}h(w)\, dw
\]
satisfies 
\[
\Delta g=\frac{\partial h}{\partial z}
\]
in $N$. By the bound on $h$ we find that 
\begin{equation}
|g|_{L^{\infty}(N)}\leq C(n)[(|\eta|+\frac{kAB}{r})r^{2}+|c|],
\end{equation}
since for $p>n$, $\frac{\partial V}{\partial z}$ is in $L^{q}$
where $q$ is the dual exponent of $p$. The function $D_{zz}^{2}(\tau v_{\epsilon})-g$
is then harmonic in $N\cap B_{1/4}(0)$. The boundary of this set
consists of part of $\partial B_{1/4}(0)$ in which $\tau=0$ and
$g$ is bounded, and another part inside $B_{1/4}(0)$ where corresponds
to the free boundaries and both $g,\, D_{zz}^{2}(\tau v_{\epsilon})$
are bounded there by the above bounds. Therefore by the maximum principle
we get 
\begin{equation}
|D_{zz}^{2}v_{\epsilon}(0)|\leq C(n)[(|\eta|+\frac{kAB}{r})r^{2}+|c|].
\end{equation}
Hence 
\begin{eqnarray*}
 & |D_{zz}^{2}u_{\epsilon}(x_{0})| & \leq C(n)[|\eta|+\frac{kAB}{r}+\frac{|c|}{r^{2}}]\\
 &  & =C(n)[|\eta|+\frac{kA^{2}B}{d_{K^{\circ}}(x_{0},\partial U_{\epsilon})}+\frac{A^{2}|c|}{(d_{K^{\circ}}(x_{0},\partial U_{\epsilon}))^{2}}].
\end{eqnarray*}
The proof of the lemma is complete once we notice that for $x\in U_{\epsilon}$
\[
d_{K^{\circ}}(x,\partial U_{\epsilon})\geq d_{K^{\circ}}(x,\partial U)-A\epsilon.
\]
\end{proof}
\begin{acknowledgement*}
I would like to express my gratitude to Lawrence C. Evans and Nicolai
Reshetikhin for their invaluable help with this research.
\end{acknowledgement*}

\bibliographystyle{abbrvnat}
\setcitestyle{numerical,open={((},close={))}}
\bibliography{New-Paper-Bibliography}

\vspace{0.5cm}

\address{Department of Mathematics, UC Berkeley, Berkeley, CA 94720, USA}

\email{safdari@berkeley.edu}
\end{document}